\newtheorem{theorem}{Theorem}[section]
\newtheorem{corollary}[theorem]{Corollary}
\newtheorem{lemma}[theorem]{Lemma}
\newtheorem{assumption}[theorem]{Assumption}
\newtheorem{definition}{Definition}[section]
\newtheorem{remark}{Remark}
\renewcommand{\abstractname}{Abstract}
\numberwithin{equation}{section}
\newcommand{\Uad}{{U_{\mathrm{ad}}}}
\newcommand{\Uadnu}{{U^\nu_{\mathrm{ad}}}}
\newcommand{\Anua}{\mathcal{A}^{\nu,a}}
\newcommand{\Anub}{\mathcal{A}^{\nu,b}}
\newcommand{\Inu}{\mathcal{I}^\nu}
\newcommand{\Aset}{\mathcal{A}}
\newcommand{\Iset}{\mathcal{I}}
\newcommand{\Yset}{\mathcal{Y}}
\newcommand{\N}{\mathbb{N}}
\newcommand{\R}{\mathbb{R}}
\newcommand{\CO}{{C(\bar{\Omega})}}
\newcommand{\U}{{L^2(\Omega)}}
\newcommand{\by}{\bar{y}}
\newcommand{\bu}{\bar{u}}
\newcommand{\bp}{\bar{p}}
\newcommand{\bmu}{\bar{\mu}}
\newcommand\norm[1]{\left\lVert#1\right\rVert}
\newcommand\dx{\,\mathrm{d}x}
\title{\large \textbf{\MakeUppercase{On Non-Reducible Multi-Player Control Problems and their Numerical Computation}} \thanks{The first author was supported by the German Research Foundation (DFG) within the priority program "Non-smooth and Complementarity-based Distributed Parameter Systems: Simulation and Hierarchical Optimization" (SPP 1962) under grant number Wa 3626/3-1 and the second author was supported under Wa 3626/1-1}}
\author{
	Veronika Karl%
	\thanks{\noindent Universität Würzburg, Institut für Mathematik,
		Emil-Fischer-Str.\ 30, 97074 Würzburg, Germany;\newline 
		\hspace*{1.9em} \Letter\href{mailto:veronika.karl@mathematik.uni-wuerzburg.de}{\texttt{ veronika.karl@mathematik.uni-wuerzburg.de}}}
       \and
       Frank P{\"o}rner%
       	\thanks{Universität Würzburg, Institut für Mathematik,
		Emil-Fischer-Str.\ 30, 97074 Würzburg, Germany;\newline
		\hspace*{1.9em} \Letter\href{mailto:frank.poerner@mathematik.uni-wuerzburg.de}{\texttt{ frank.poerner@mathematik.uni-wuerzburg.de}}  } 
}
\titleformat{\section}[runin]{\bfseries}{}{1em}{\thesection. }[. ]
\titleformat{\subsection}[runin]{\bfseries}{}{1em}{\thesubsection. }[. ]
\begin{document}
\maketitle

\vspace{-5ex}
\renewcommand{\abstractname}{\vspace{-\baselineskip}}
\begin{abstract}
\noindent \textbf{Abstract.} In this article we consider a special class of Nash equilibrium problems that cannot be reduced to a single player control problem. Problems of this type can be solved by a semi-smooth Newton method. Applying results from the established convergence analysis we derive superlinear convergence for the associated Newton method and the equivalent active-set method. We also provide detailed finite element discretizations for both methods. Several numerical examples are presented to support the theoretical findings.
 
\noindent 
	
	\bigskip
	\noindent\textbf{AMS Subject Classification: 49M05, 49M15,  65K10, 65K15 } 
	
	
	\bigskip
	\noindent\textbf{Keywords: GNEP, semi-smoothness, Newton method, variational inequality}
\end{abstract}

\section{Introduction}
We consider a Nash Equilibrium Problem (NEP) in the optimal control setting. Here, $N\in\N$ denotes the number of players. The strategy space of all players is given by $U:=L^2(\Omega)^N$. The player $\nu\in\lbrace1,...,N\rbrace$ is in control of the variable $u^\nu\in L^2(\Omega)$. The strategies of all players, except the $\nu$-th player are denoted by $u^{-\nu}\in L^2(\Omega)^{N-1}$. Hence, we have the notation $u:=(u^\nu,u^{-\nu})$. 
Investigating multi-player control problems in the function space setting one usually assumes \cite{HintermSurowiec2013gnep,HintermSurowiecKaemm2015gnep,KanzowKarlSteckWachsmuth2017gnep} that the players' observation areas coincide. An exemplary problem setting for the $\nu$-th player's problem is given by
\begin{align*}
\underset{u^\nu\in L^2(\Omega)}\min \ &\frac{1}{2}\norm{Su-y_d^\nu}_{L^2(\Omega)}^2 +\frac{\alpha}{2}\norm{u^\nu}_{L^2(\Omega)}^2 \\
\text{s.t.}\quad &u^\nu \in \Uadnu,
\end{align*}
where $\Uadnu\subset L^2(\Omega)$ is a bounded convex set. In this setting existence and uniqueness of solutions are quite forward to show by exploiting standard arguments. Indeed the problem can be transformed into a convex single player control problem \cite[Proposition 3.10]{HintermSurowiecKaemm2015gnep}. However, the situation becomes considerably more complicated if the observation area of the tracking term differs for each player. To be more precise we consider $\Omega_\nu\subset\Omega$ and assume that the $\nu$-th player aims at solving
\begin{equation}\label{eq:NEP-offest-introduction}
\begin{split}
\underset{u^\nu\in L^2(\Omega)}\min\ &\frac{1}{2}\norm{Su-y_d^\nu}_{L^2(\Omega_\nu)}^2 +\frac{\alpha}{2}\norm{u^\nu}_{L^2(\Omega)}^2 \\
\text{s.t.}\quad &u^\nu \in \Uadnu. 
\end{split}
\end{equation}
We will give the precise setting below in Section \ref{sec:setting}. Problems of this type lack of the possibility to be reduced to a single control problem and therefore require a different treatment. To the best of our knowledge, until now, there exists no theory regarding the uniqueness of solutions of this kind of problems. By imposing an assumption on the regularization parameter $\alpha >0$ we will show existence and uniqueness of solutions of the NEP \eqref{eq:NEP-offest-introduction} in Theorem \ref{thm:NEPunique}. 
Solving NEPs is not only interesting for solving the problem itself. Moreover, solving generalized Nash equilibrium problems (GNEPs) that include inequality constraints like $Su\leq \psi$, $\psi\in \CO$ require in certain solution methods the solution of a sequence of NEPs \cite{HintermSurowiec2013gnep,KanzowKarlSteckWachsmuth2017gnep}. It is a natural approach to apply the semi-smooth Newton method in order to solve these multi-player control problems that are given by the following extension of \eqref{eq:NEP-offest-introduction} 
\begin{equation}\label{eq:intro2}
\begin{split}
\underset{u^\nu\in L^2(\Omega)}\min\ &\frac{1}{2}\norm{Su-y_d^\nu}_{L^2(\Omega_\nu)}^2 +\frac{\alpha}{2}\norm{u^\nu}_{L^2(\Omega)}^2 +\frac{1}{2\rho}\norm{(\mu+\rho(Su-\psi))_+}^2_{L^2(\Omega)} 
\\
\text{s.t.}\quad &u^\nu \in \Uadnu,
\end{split}
\end{equation}
where $\mu\in L^2(\Omega)$ and $\rho>0$ denotes a penalization parameter. Furthermore $(\cdot)_+ := \max(\cdot,0)$ in a pointwise almost everywhere sense. Here, we will focus on the studies of the corresponding semi-smooth Newton method. Since the tracking term is again considered on $\Omega_\nu$ only, the method can be expected to converge superlinear only if $\alpha$ is sufficiently large, see Theorem \ref{theo:convSSN-PDE}. \medskip\newline
The outline of this paper is as follows: In Section \ref{sec:setting} we introduce the reader to non-reducible NEPs and the extended augmented NEP. Here, our main results state existence and uniqueness of solutions, see Theorem \ref{thm:NEPunique} and Theorem \ref{thm:augNEP_unique}. In Section \ref{sec:SSNMgen} we collect results from the literature that are necessary for discussing superlinear convergence of the semi-smooth Newton method. Here, we contribute Lemma \ref{lemma:max_semismooth} that proves semi-smoothness of  $u\mapsto\max(a,u)$ from $L^q(\Omega)$ to $ L^p(\Omega)$ even if $a\in L^r(\Omega)$, with $1\leq p\leq r<q\leq \infty$.
In Section \ref{sec:NewtonNonRegNEP} we apply the semi-smooth Newton method to the augmented NEP \eqref{eq:intro2}, state a convergence result and give a detailed description of the implementation applying a finite element discretization. The equivalence of the semi-smooth Newton method and the active-set method is treated in Section \ref{sec:activeSet}. To illustrate our theoretical findings and to compare the two presented methods we study numerical examples in detail.

\section{The Non-Reducible Problem}
In this section we state the problem setting, establish optimality conditions and give a sufficient condition that yields existence of unique solutions for a non-reducible NEP.
\subsection{Problem Setting} \label{sec:setting}
Let $\Omega\subset \R^n, n\in\lbrace 1,2,3\rbrace$. Let us first consider the case if each player aims at solving the following Nash equilibrium problem in the optimal control setting with identical tracking type cost functional for each player, i.e.,
\begin{align*}
\min\limits_{u^\nu \in L^2(\Omega)} \; &\frac{1}{2}\norm{Su-y_d^\nu}_{L^2(\Omega)}^2 +\frac{\alpha}{2}\norm{u^\nu}_{L^2(\Omega)}^2\\
 \text{s.t. }\quad & u^\nu\in\Uadnu ,
\end{align*}
where
\[
\Uadnu:=\left\lbrace u^\nu\in L^2(\Omega)\colon u_a^\nu(x)\leq u^\nu(x)\leq u_b^\nu(x) \right\rbrace 
\]
with $u_a^\nu,u_b^\nu\in L^2(\Omega)$. Clearly the set $\Uadnu$ is bounded and convex, hence weakly compact. The operator $S\colon H^{-1}(\Omega)^N\rightarrow Y\hookrightarrow L^q(\Omega)^N$ denotes the solution operator of a linear elliptic partial differential equation and the state space $Y$ is assumed to be embedded in $L^q(\Omega)^N$ with $q>2$. For instance we can consider $S$ as the control-to-state map of
\begin{equation}\label{eq:pdeDirichlet}
\begin{alignedat}{2}
Ay &= \sum_{\nu=1}^N u^\nu \quad &&\text{in } \Omega,\\
y  &= 0 						 &&\text{on }\partial \Omega.
\end{alignedat}
\end{equation}
We assume that the operator $A \colon Y\rightarrow H^{-1}(\Omega)$ is linear, bounded and continuously invertible. Note that this is the case for $A:=-\Delta$, which is an isomorphism from $H_0^1(\Omega)$ to its dual $H^{-1}(\Omega)$. This can be proven using the Lax-Milgram theorem. In this setting it is convenient to take $Y := H_0^1(\Omega) \cap \CO$, see e.g. \cite{Casas2012}. Since $u^\nu\in L^2(\Omega)\hookrightarrow H^{-1}(\Omega)$ the state equation is well-posed. The corresponding solution operator satisfies 
\[
S \colon u\mapsto y=A^{-1}\sum_{\nu=1}^Nu^\nu,\quad S\colon H^{-1}(\Omega)^N\rightarrow H_0^1(\Omega)\cap \CO.
\]
Since for $n=1$ we have the embedding $H_0^1(\Omega) \hookrightarrow C(\bar \Omega)$, for $n=2$ we have $H^1(\Omega)\hookrightarrow L^q(\Omega)$ with $ 1\leq q <\infty$ and for $n=3$ we still have $H^1(\Omega)\hookrightarrow L^6(\Omega)$, hence the required assumption on $S$ is satisfied.
Due to the linearity of $A^{-1}$ we have
\begin{align*}
Su = \sum_{\nu=1}^N A^{-1}u^\nu := \sum_{\nu=1}^N S_\nu u^\nu, \quad S_\nu\colon H^{-1}(\Omega) \rightarrow Y\hookrightarrow L^q(\Omega), \; S_\nu u^\nu := A^{-1} u^\nu. 
\end{align*}
Problems of this type can be reduced to a single convex control problem given by 
\begin{align*}
\underset{u\in L^2(\Omega)}{\min} \; &\frac{1}{2}\norm{Su}^2_{L^2(\Omega)} + \sum_{\nu=1}^N \left( -(u^\nu,S_\nu^* y_d^\nu) + \frac{\alpha}{2}\norm{u^\nu}^2_{L^2(\Omega)}\right)\\
\text{s.t.}\quad &u\in\Uad.
\end{align*}
Here $\Uad := \Uad^1 \times \dots \times \Uad^N$. This easily yields the existence of a unique equilibrium for $\alpha>0$ \cite[Proposition 3.10]{HintermSurowiecKaemm2015gnep}. Let us now investigate the case if the tracking type functional for the $\nu$-th player is considered on a subset $\Omega_\nu\subseteq \Omega$ only. In this case reduction to a single control problem is not possible and we will refer to this type of problem as a \emph{non-reducible NEP}. We consider the cost functional 
\begin{align*}
f_\nu(u):= \frac{1}{2}\norm{Su-y_d^\nu}_{L^2(\Omega_\nu)}^2 +\frac{\alpha}{2}\norm{u^\nu}_{L^2(\Omega)}^2
\end{align*}
and analyze the Nash equilibrium problem
\begin{align}
\underset{u^\nu\in L^2(\Omega)}{\min} f_\nu(u)\quad  \text{ s.t. } u^\nu\in \Uadnu.
\tag{$P_\nu$}
\label{eq:prob_non_red}
\end{align}
Our aim is to study under which conditions \eqref{eq:prob_non_red} admits a unique solution. Let $\Omega \subseteq \R^n$ be a bounded Lipschitz domain and $\Omega_\nu \subseteq \Omega$ for $\nu=1,...,N$. For further use we define the characteristic function
\[
\chi_\nu(x)\colon \Omega \to \R, \quad x \mapsto \begin{cases}
1 & \text{if } x \in \Omega_\nu,\\0 & \text{else},
\end{cases}
\]
as well as $U:= L^2(\Omega)^N$ and the operator
\begin{align}\label{eq:defF}
F(u)\colon U \to U, \quad F^\nu (u) &:= D_{u^\nu}f_\nu(u) = S_\nu^\ast \chi_\nu (Su - y_d^\nu) + \alpha u^\nu,
\end{align}
where $D_{u^\nu}$ denotes the partial G\^ateaux derivative with respect $u^\nu$. Due to the convexity of the cost functional solutions of the NEP can be characterized
via controls $u\in U$ that solve the variational inequality
\begin{align}\label{eq:fonc_pert_NEP}
& (F(\bu),v-\bu)_U \geq 0, \qquad \forall v\in \Uad \notag\\
\Leftrightarrow\ &\sum_{\nu=1}^N \big( S_\nu^\ast \chi_\nu (S\bu - y_d^\nu) + \alpha \bu^\nu  ,v^\nu-\bu^\nu \big)\geq 0, \qquad \forall v^\nu\in\Uadnu
\end{align}

We will exploit this relation to prove uniqueness of solutions of problem \eqref{eq:prob_non_red}. It is well known that \eqref{eq:fonc_pert_NEP} can be equivalently formulated using the projection operator $P_\Uad$ onto the set $\Uad$. A solution $\bar u \in \Uad$ of \eqref{eq:prob_non_red} can be characterized by the equation
\begin{equation}\label{eq:fixed_point_formulation}
0 = \bar u - P_\Uad \big( \bar u - \gamma F(\bar u) \big)
\end{equation}
for all $\gamma > 0$. Furthermore, this formulation allows us to tackle the problem using a semi-smooth Newton method.

\subsection{Existence and Uniqueness of Solutions}
If the variational inequality \eqref{eq:fonc_pert_NEP} is uniquely solvable the NEP \eqref{eq:prob_non_red} admits a unique solution. It is well known that this is the case if $F$ is strongly monotone \cite[Theorem 1.4]{KinderlStampacchia2000varinequalities}. The next theorem states that this is the case if the regularization parameter $\alpha$ is chosen large enough, depending on the sets $\Omega_\nu$. Let us define the set 
\[
Z := \bigcup\limits_{\nu = 1}^N \Omega_\nu
\]
with associated characteristic function $\chi_Z$. In order to deal with the different sets $\Omega_\nu$ we need the following assumption.

\begin{assumption}\label{ass:alpha}
Assume that the regularization parameter $\alpha$ satisfies the inequality
\begin{equation}\label{eq:ass_alpha}
	\alpha > \frac{1}{4} \sum\limits_{\nu=1}^N \|  \chi_Z(  S_\nu - \chi_\nu S_\nu )   \|^2_{L^2(\Omega) \to L^2(\Omega)}.
\end{equation}
We will refer to the right hand side of \eqref{eq:ass_alpha} as the \emph{offset}.
\end{assumption}
Note that for a fixed operator $S$ the offset depends only on the sets $\Omega_\nu$. Before we use this assumption to prove existence and uniqueness of solutions let us analyze this condition.

Let us assume that $S_\nu\colon H^{-1}(\Omega) \to  H_0^1(\Omega) \cap C(\bar \Omega)$. As already mentioned this is the case for the operator $S_\nu = (-\Delta)^{-1}$. It is well known tat the solution operator $S_\nu$ is continuous. Hence, we know that the number
\[
C := \max\limits_{\nu=1,...,N}\sup\limits_{\|w\|_{L^2(\Omega)} = 1} \|S_\nu w\|_{L^\infty(\Omega)}< \infty
\]
exists. Now we obtain
\begin{align*}
\|\chi_Z(S_\nu - \chi_\nu S_\nu)\|_{L^2(\Omega) \to L^2(\Omega)} &= \sup\limits_{\|w\|_{L^2(\Omega)} = 1} \|\chi_Z (S_\nu - \chi_\nu S_\nu)w\|_{L^2(\Omega)}\\
& = \sup\limits_{\|w\|_{L^2(\Omega)} = 1}  \left( \int_Z (1 - \chi_\nu)^2 (S_\nu w)^2  \dx \right)^{\frac{1}{2}}\\
&\leq \sup\limits_{\|w\|_{L^2(\Omega)} = 1}  \norm{S_\nu w}_{L^\infty(\Omega)} \left(\int_Z (1 - \chi_\nu)^2 \dx \right)^{\frac{1}{2}}\\
& \leq C \sqrt{  \text{meas}(Z \setminus \Omega_\nu) }.
\end{align*}
Hence,
\[
\sum\limits_{\nu=1}^N \|  \chi_Z(  S_\nu - \chi_\nu S_\nu )   \|^2_{L^2(\Omega) \to L^2(\Omega)} \leq C^2 \sum\limits_{\nu=1}^N   \text{meas}(Z \setminus \Omega_\nu).
\]
Thus, we can interpret the offset from \eqref{eq:ass_alpha} as the maximum difference of the set $Z = \cup_\nu \Omega_\nu$ and the sets $\Omega_\nu$.
If $\Omega_\nu =  \Omega$ for all $\nu$ this offset is obviously zero and we are in the setting of a reducible NEP. However, if the offset is too large, the existence of minimizers can not be guaranteed by our theory for all $\alpha > 0$. 
Let us now start to exploit Assumption \ref{ass:alpha}.
\begin{theorem}\label{thm:NEPunique}
Let Assumption \ref{ass:alpha} be satisfied. Then there exists an unique solution of the non-reducible NEP \eqref{eq:prob_non_red}. 
\end{theorem}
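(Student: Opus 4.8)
The plan is to reduce everything to showing that the operator $F$ defined in \eqref{eq:defF} is strongly monotone on $U$; once this is established, unique solvability of the variational inequality \eqref{eq:fonc_pert_NEP} over the nonempty, bounded, closed and convex set $\Uad$ — and therefore of the NEP \eqref{eq:prob_non_red} via the characterization \eqref{eq:fonc_pert_NEP} — follows directly from \cite[Theorem 1.4]{KinderlStampacchia2000varinequalities}, since $F$ is moreover affine and hence Lipschitz continuous. So the whole argument hinges on producing a constant $c>0$ with $(F(u)-F(v),u-v)_U \ge c\|u-v\|_U^2$ for all $u,v\in U$.

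First I would compute this expression explicitly. Writing $w := u-v$, using $Sw = \sum_{\mu=1}^N S_\mu w^\mu$ and the adjoint relation $(S_\nu^\ast \chi_\nu Sw, w^\nu) = (\chi_\nu Sw, S_\nu w^\nu)$, the $\alpha$-part separates off and one is left with
\[
(F(u)-F(v),u-v)_U = \alpha\|w\|_U^2 + \sum_{\nu=1}^N (\chi_\nu Sw, S_\nu w^\nu).
\]
In the reducible case $\Omega_\nu = \Omega$ the cross term collapses to $\|Sw\|_{L^2(\Omega)}^2 \ge 0$ and strong monotonicity with modulus $\alpha$ is immediate; the whole point is to control this term in the non-reducible case.

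The key manipulation is to split $\chi_\nu = \chi_Z - (\chi_Z - \chi_\nu)$ inside the cross term. Because $\Omega_\nu \subseteq Z$ we have $\chi_\nu\chi_Z = \chi_\nu$, so the ``good'' part reassembles as $\sum_\nu (\chi_Z Sw, S_\nu w^\nu) = (\chi_Z Sw, Sw) = \|\chi_Z Sw\|_{L^2(\Omega)}^2 \ge 0$, while the remainder produces exactly the operators appearing in Assumption \ref{ass:alpha}, namely $(\chi_Z - \chi_\nu)S_\nu = \chi_Z(S_\nu - \chi_\nu S_\nu) =: T_\nu$. This yields
\[
\sum_{\nu=1}^N (\chi_\nu Sw, S_\nu w^\nu) = \|\chi_Z Sw\|_{L^2(\Omega)}^2 - \sum_{\nu=1}^N (\chi_Z Sw, T_\nu w^\nu).
\]
Two applications of Cauchy--Schwarz (first termwise in $L^2(\Omega)$, estimating $\|T_\nu w^\nu\|\le\|T_\nu\|_{L^2(\Omega)\to L^2(\Omega)}\|w^\nu\|$, then in the sum over $\nu$) bound the defect by $\|\chi_Z Sw\|_{L^2(\Omega)}\,\beta\,\|w\|_U$, where $\beta^2 := \sum_\nu \|T_\nu\|_{L^2(\Omega)\to L^2(\Omega)}^2$ is four times the offset of Assumption \ref{ass:alpha}.

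It then remains to close the estimate. Setting $a := \|\chi_Z Sw\|_{L^2(\Omega)}$ and $b := \|w\|_U$, the lower bound reads $a^2 - \beta a b + \alpha b^2$, and Young's inequality $\beta a b \le a^2 + \tfrac{\beta^2}{4}b^2$ turns this into $(\alpha - \tfrac{\beta^2}{4})\|w\|_U^2$. Assumption \ref{ass:alpha} is precisely the statement $\alpha > \tfrac14\beta^2$, so $c := \alpha - \tfrac14\sum_\nu \|T_\nu\|_{L^2(\Omega)\to L^2(\Omega)}^2 > 0$ is the desired strong monotonicity modulus, which completes the proof. I expect the only delicate point to be the bookkeeping in the splitting step — in particular exploiting $\chi_\nu\chi_Z=\chi_\nu$ to recognize that the good part is the nonnegative quantity $\|\chi_Z Sw\|_{L^2(\Omega)}^2$ and that the leftover is governed term by term by the $T_\nu$ — whereas the final completion of the square is routine and explains why $\tfrac14$ is the sharp constant in the assumption.
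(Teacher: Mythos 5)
Your proposal is correct and follows essentially the same route as the paper: reduction to strong monotonicity of $F$ via \cite[Theorem 1.4]{KinderlStampacchia2000varinequalities}, the identical splitting $\chi_\nu S_\nu = \chi_Z S_\nu - \chi_Z(S_\nu - \chi_\nu S_\nu)$ producing the nonnegative term $\|\chi_Z Sw\|_{L^2(\Omega)}^2$, and the same modulus $\alpha - \tfrac14\sum_\nu\|\chi_Z(S_\nu-\chi_\nu S_\nu)\|_{L^2(\Omega)\to L^2(\Omega)}^2$. The only cosmetic difference is the order of estimates — you apply Cauchy--Schwarz to the defect term and then Young's inequality on scalars, while the paper applies Young's inequality first and then bounds $\bigl\|\sum_\nu \chi_Z(S_\nu-\chi_\nu S_\nu)w^\nu\bigr\|$ by the operator norms — which yields the same constant.
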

\begin{proof}
As already mentioned it is enough to show that the operator $F$ defined in \eqref{eq:defF} is strongly monotone. A calculation reveals for arbitrary $u,v \in U$
\begin{align*}
\left( F(u) - F(v), u -v \right)_U &=\sum\limits_{\nu=1}^N \left(  S_\nu^\ast \chi_\nu (Su - y_d^\nu)  + \alpha u^\nu - S_\nu^\ast \chi_\nu (Sv - y_d^\nu)  - \alpha v^\nu, u^\nu - v^\nu  \right)_{L^2(\Omega)}\\
&= \sum\limits_{\nu=1}^N \left(   Su - Sv, \chi_\nu S_\nu (u^\nu - v^\nu)   \right)_{L^2(\Omega)} + \alpha \|u-v\|_U^2\\
&= \sum\limits_{\nu=1}^N \left(  ( Su - Sv, \chi_\nu S_\nu (u^\nu - v^\nu)   \right)_{L^2(Z)} + \alpha \|u-v\|_U^2.
\end{align*}
We now use the decomposition
\[
\sum\limits_{\nu=1}^N \chi_\nu S_\nu = \sum\limits_{\nu=1}^N S_\nu - \sum\limits_{\nu=1}^N \left( S_\nu - \chi_\nu S_\nu   \right)
\]
and Young's inequality to obtain the following estimate
\begin{align*}
\bigg( &F(u) - F(v), u -v \bigg)_U\\
&= \|Su  - Sv\|_{L^2(Z)}^2 - \left( Su - Sv,   \sum\limits_{\nu=1}^N \left( S_\nu - \chi_\nu S_\nu   \right)(u^\nu - v^\nu)  \right)_{L^2(Z)} + \alpha \|u-v\|_U^2\\
&\geq  -  \frac{1}{4}  \left\| \sum\limits_{\nu=1}^N \chi_Z \left( S_\nu - \chi_\nu S_\nu   \right)(u^\nu - v^\nu) \right\|_{L^2(\Omega)}^2 + \alpha \|u-v\|_U^2\\
&\geq - \frac{1}{4}\left(  \sum\limits_{\nu=1}^N \|  \chi_Z(  S_\nu - \chi_\nu S_\nu )   \|^2_{L^2(\Omega) \to L^2(\Omega)}  \right) \left(   \sum\limits_{\nu=1}^N \| u^\nu - v^\nu \|_{L^2(\Omega)}^2   \right)  + \alpha \|u-v\|_U^2\\
&= \left(   \alpha - \frac{1}{4} \sum\limits_{\nu=1}^N \|  \chi_Z(  S_\nu - \chi_\nu S_\nu )   \|^2_{L^2(\Omega) \to L^2(\Omega)}   \right) \|u-v\|_U^2.
\end{align*}
Due to our assumption on $\alpha$ we now conclude that the operator $F$ is strongly monotone.
\end{proof}
\begin{remark}
For getting existence of a solution of the non-reducible NEP \eqref{eq:prob_non_red} without requiring its uniqueness, it would be enough to claim monotonicity of $F$ only, see \cite[Theorem 1.4]{KinderlStampacchia2000varinequalities}. Thus, we can relax Assumption \ref{ass:alpha} slightly, by assuming that $\alpha$ satisfies 
\[ 	\alpha \geq \frac{1}{4} \sum\limits_{\nu=1}^N \|  \chi_Z(  S_\nu - \chi_\nu S_\nu )   \|^2_{L^2(\Omega) \to L^2(\Omega)}.\]
\end{remark}
The condition on the regularization parameter $\alpha$ is needed to guarantee the existence of a unique solution of \eqref{eq:prob_non_red}. If $\alpha$ is chosen too small the resulting operator $F$ might not be strongly monotone.
It is quite interesting that for $\alpha > 0$ the operator $F$ is still strongly monotone if all the domains $\Omega_\nu$ coincide, but not necessarily equal to the domain $\Omega$. 

\begin{corollary}
If $\Omega_\nu = \Omega_0 \subseteq \Omega$ for all $\nu =1,...,N$, then the NEP  \eqref{eq:prob_non_red} is uniquely solvable for all $\alpha > 0$.
\end{corollary}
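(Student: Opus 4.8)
The plan is to show that the common-domain hypothesis $\Omega_\nu = \Omega_0$ for all $\nu$ forces the offset on the right-hand side of \eqref{eq:ass_alpha} to vanish identically, so that Assumption \ref{ass:alpha} is satisfied for every $\alpha > 0$ and Theorem \ref{thm:NEPunique} applies without further work.

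First I would observe that under this hypothesis the union $Z = \bigcup_{\nu=1}^N \Omega_\nu$ collapses to $\Omega_0$, so the associated characteristic functions coincide: $\chi_Z = \chi_{\Omega_0} = \chi_\nu$ for every $\nu$. The decisive point is then the idempotency of characteristic functions under pointwise multiplication. Since $\chi_Z \chi_\nu = \chi_Z^2 = \chi_Z$ almost everywhere, the operator entering the offset degenerates to zero, namely
\[
\chi_Z(S_\nu - \chi_\nu S_\nu) = \chi_Z S_\nu - \chi_Z \chi_\nu S_\nu = \chi_Z S_\nu - \chi_Z S_\nu = 0.
\]
Hence every summand in the offset is zero and the right-hand side of \eqref{eq:ass_alpha} equals $0$.

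Consequently Assumption \ref{ass:alpha} reduces to the bare requirement $\alpha > 0$, which holds by hypothesis. Invoking Theorem \ref{thm:NEPunique} then immediately yields the unique solvability of \eqref{eq:prob_non_red} for every $\alpha > 0$, completing the argument.

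Because the corollary is a direct specialization of the strong-monotonicity estimate already proved, there is no genuine obstacle here. The only point that requires care is the algebraic identity $\chi_Z \chi_\nu = \chi_Z$: this is precisely what distinguishes the coinciding-domains case from the general situation, where the cross terms $S_\nu - \chi_\nu S_\nu$ are not annihilated by $\chi_Z$ and the offset must be controlled by a positive lower bound on $\alpha$.
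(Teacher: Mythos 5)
Your proof is correct and follows exactly the route the paper intends: with $\Omega_\nu=\Omega_0$ one has $\chi_Z=\chi_\nu$, so $\chi_Z(S_\nu-\chi_\nu S_\nu)=0$, the offset in \eqref{eq:ass_alpha} vanishes, Assumption \ref{ass:alpha} reduces to $\alpha>0$, and Theorem \ref{thm:NEPunique} applies. The paper leaves this corollary unproved precisely because it is this immediate specialization, and your verification of the idempotency step $\chi_Z\chi_\nu=\chi_Z$ is the only content needed.
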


\subsection{The Augmented NEP}\label{sec:subsec:augNEP}
In this section we want to extend our result from the former section. Let $\Omega \subseteq \R^n$ with $n\in\lbrace 1,2,3\rbrace$ be a bounded Lipschitz domain. Problems of this type are arise during the process of solving generalized Nash equilibrium problems (GNEPs) where the individual problem is given by
\begin{equation}
\begin{alignedat}{3}
\underset{u^\nu\in  L^2(\Omega)}\min\;  &\ f_\nu(u^\nu):=\frac{1}{2}\norm{Su-y_d^\nu}^2_{L^2(\Omega_\nu)}+\frac{\alpha}{2}\norm{u^\nu}^2_{L^2(\Omega)}\ \\
\text{s.t.} \quad & u^\nu\in \Uadnu\\
&(Su)(x)\leq \psi(x) \quad \text{a.e. in }\bar\Omega,
\end{alignedat}
\label{eq:GNEP}
\end{equation}
by applying an augmented Lagrange method, see \cite{KanzowKarlSteckWachsmuth2017gnep}. Here, $\psi \in C(\bar \Omega)$ defines an additional upper bound for the state $y = Su$.  To guarantee the existence of Lagrange multipliers it is necessary to have $\psi \in C(\bar \Omega)$, see \cite{KanzowSteckWachsmuth2018augmented}. Solving \eqref{eq:GNEP} with an augmented Lagrange method requires a sequence of solutions of the following Nash equilibrium problem, where $\alpha, \rho>0$
\begin{equation}
\begin{alignedat}{5}
\min\limits_{u^\nu \in L^2(\Omega)} \;& f_\nu^{AL}(u,\mu,\rho):=\frac{1}{2}\norm{Su-y_d^\nu}_{L^2(\Omega_\nu)}^2 +\frac{\alpha}{2}\norm{u^\nu}_{L^2(\Omega)}^2 + \frac{1}{2\rho} \|\bmu(u)\|_{L^2(\Omega)}^2\\
\text{s.t.} \quad &u^\nu\in \Uadnu
\end{alignedat}
\tag{$P_{\nu}^{AL}$}
\label{eq:augNEP_offset}
\end{equation}

with $\bar \mu(u):=(\mu+\rho(Su-\psi))_+$ and $\mu\geq 0$ is assumed to be a function in $L^2(\Omega)$. 
Defining 
\begin{align*}
F(u):=\left(D_{u^1}f^{AL}_1(u,\mu,\rho),\dots,D_{u^N}f^{AL}_N (u,\mu,\rho) \right)
\end{align*}
it is again the convexity of the cost functional that allows us to characterize the solution of the NEP 
via controls $\bu\in U$ that solve the variational inequality
\begin{align*}
& (F(\bu),v-\bu)_U \geq 0, \qquad \forall v\in \Uad \notag\\
\Leftrightarrow\ &\sum_{\nu=1}^N( S_\nu^\ast (\chi_\nu (S\bu - y_d^\nu))+\bmu(u)) + \alpha \bu^\nu  ,v^\nu-\bu^\nu)\geq 0, \qquad \forall v^\nu\in\Uadnu.
\end{align*}
Similar as above an equivalent formulation using the projection operator can be established, see \eqref{eq:fixed_point_formulation}.
From Theorem \ref{thm:NEPunique} we know that the mapping
\[
u\mapsto \left(D_{u^1} f_1(u^1),\dots,D_{u^N} f_N(u^N)\right)
\]
 is strongly monotone if Assumption \ref{ass:alpha} is satisfied. Furthermore, we know that the function
 \[
 u \mapsto \frac{1}{2\rho}\norm{\bar \mu(u)}_{L^2(\Omega)}^2
 \]
 is convex and its derivative is monotone. Hence $F$ is strongly monotone and Theorem \ref{thm:NEPunique} can easily be adapted to that case.

\begin{theorem}\label{thm:augNEP_unique}
Let Assumption \ref{ass:alpha} be satisfied. Then there exists an unique solution of problem \eqref{eq:augNEP_offset}. Further, if $\Omega_\nu = \Omega_0 \subseteq \Omega$ for all $\nu =1,...,N$, then the NEP  \eqref{eq:augNEP_offset} is uniquely solvable for all $\alpha > 0$.
\end{theorem}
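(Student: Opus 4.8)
The plan is to reuse Theorem~\ref{thm:NEPunique} by splitting the operator $F$ associated with \eqref{eq:augNEP_offset} into the gradient of the unpenalized game and the gradient of the penalty term, and then to observe that strong monotonicity is preserved when a merely monotone operator is added. Writing $F_0^\nu(u):= S_\nu^\ast \chi_\nu(Su - y_d^\nu) + \alpha u^\nu$ for the operator already treated in Theorem~\ref{thm:NEPunique} and $G^\nu(u):= S_\nu^\ast \bmu(u)$ for the contribution of the penalty, one has $F = F_0 + G$. Under Assumption~\ref{ass:alpha} the operator $F_0$ is strongly monotone with modulus $\alpha - \tfrac14\sum_\nu\|\chi_Z(S_\nu - \chi_\nu S_\nu)\|^2_{L^2(\Omega)\to L^2(\Omega)} > 0$, so it suffices to show that $G$ is monotone; then $(F(u)-F(v),u-v)_U = (F_0(u)-F_0(v),u-v)_U + (G(u)-G(v),u-v)_U$ inherits the strict positive lower bound of the first summand, and existence and uniqueness follow from the strong monotonicity criterion for variational inequalities \cite[Theorem 1.4]{KinderlStampacchia2000varinequalities} applied to \eqref{eq:augNEP_offset}, exactly as in the proof of Theorem~\ref{thm:NEPunique}.

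The main step is therefore to verify monotonicity of $G$, and this is where the only genuine computation lies. Differentiating $u\mapsto \tfrac{1}{2\rho}\norm{\bmu(u)}_{L^2(\Omega)}^2$ with $\bmu(u)=(\mu+\rho(Su-\psi))_+$ and using $D_{u^\nu}(Su)=S_\nu$ gives $G^\nu(u)=S_\nu^\ast(\mu+\rho(Su-\psi))_+$. Setting $w_u:=\mu+\rho(Su-\psi)$ and $w_v:=\mu+\rho(Sv-\psi)$, so that $w_u-w_v=\rho\,S(u-v)$, I would compute
\begin{align*}
(G(u)-G(v),u-v)_U &= \sum_{\nu=1}^N\big((w_u)_+-(w_v)_+,\,S_\nu(u^\nu-v^\nu)\big)_{L^2(\Omega)}\\
&= \big((w_u)_+-(w_v)_+,\,S(u-v)\big)_{L^2(\Omega)}\\
&= \tfrac{1}{\rho}\big((w_u)_+-(w_v)_+,\,w_u-w_v\big)_{L^2(\Omega)} \geq 0,
\end{align*}
where the final inequality is the pointwise monotonicity of the positive-part map $t\mapsto t_+$, since $((a)_+-(b)_+)(a-b)\geq 0$ for all $a,b\in\R$. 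Hence $G$ is monotone and $F=F_0+G$ is strongly monotone, which proves the first assertion.

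For the second assertion I would simply invoke the discussion of the offset preceding Theorem~\ref{thm:NEPunique}: if $\Omega_\nu=\Omega_0$ for all $\nu$, then $Z=\Omega_0$ and $\chi_Z=\chi_\nu=\chi_{\Omega_0}$, so that $\chi_Z(S_\nu-\chi_\nu S_\nu)=\chi_{\Omega_0}S_\nu-\chi_{\Omega_0}^2 S_\nu=0$ because $\chi_{\Omega_0}^2=\chi_{\Omega_0}$. The offset in \eqref{eq:ass_alpha} therefore vanishes and Assumption~\ref{ass:alpha} holds for every $\alpha>0$; applying the first part yields unique solvability of \eqref{eq:augNEP_offset} for all $\alpha>0$. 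I do not expect any serious obstacle here---the only thing to be careful about is the elementary gradient computation for the penalty and the book-keeping that turns the sum over $\nu$ into a single inner product with $S(u-v)$, after which the monotonicity of the positive-part map closes the argument.
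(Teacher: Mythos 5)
Your proposal is correct and follows essentially the same route as the paper: decompose $F$ into the operator $F_0$ from Theorem~\ref{thm:NEPunique} (strongly monotone under Assumption~\ref{ass:alpha}) plus the derivative of the penalty term, observe the latter is monotone, and conclude via strong monotonicity of the sum; the vanishing of the offset for $\Omega_\nu=\Omega_0$ likewise matches the paper's corollary. The only difference is that the paper asserts monotonicity of the penalty gradient abstractly from convexity of $u\mapsto\tfrac{1}{2\rho}\norm{\bmu(u)}_{L^2(\Omega)}^2$, whereas you verify it explicitly through the pointwise monotonicity of $t\mapsto t_+$ and the identity $w_u-w_v=\rho\,S(u-v)$, which is a correct and slightly more self-contained rendering of the same step.
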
 

We will deepen our studies of problem \eqref{eq:augNEP_offset} in Section \ref{sec:NewtonNonRegNEP}. Here, we will among others derive the corresponding Newton iteration that allows us to solve the problem numerically with superlinear convergence.

\section{Semi-smooth Newton Method}\label{sec:SSNMgen}
This section aims at collecting important notations and results from literature in order to introduce the semi-smooth Newton method and state the well known theorem that yields superlinear convergence of just this method. We complete this section by contributing Lemma \ref{lemma:max_semismooth} that proves semi-smoothness of  $u\mapsto\max(a,u)$ from $L^q(\Omega)$ to $ L^p(\Omega)$ even if $a\in L^r(\Omega)$, with $1\leq p\leq r<q\leq \infty$.\medskip\newline
To simplify our notation we define 
\[
Y_q := L^q(\Omega)^N,
\]
for some $q \in [1, \infty)$. Recall that $U =L^2(\Omega)^N$, hence we have $Y_q \hookrightarrow U$ for $q \geq 2$.
We want to apply Newton's method to an equation similar to \eqref{eq:fixed_point_formulation}. Note that due to the regularization term we can always reformulate our necessary optimality condition to
\[
0 = \bar  u - P_\Uad \big(  \bu - \gamma(  \tilde F + \alpha \bar u  )  \big)
\]	
with a function $\tilde F$. From now on we always set $\gamma := \alpha^{-1} > 0$ to simplify our equation. Hence, we are interested in finding zeros of functions $G:U\rightarrow U$ defined as
\begin{align}
G(u):=u-P_{\Uad}\left(-\frac{1}{\alpha} \tilde F(u)\right). \label{eq:defG}
\end{align}

We make the following assumption on $\tilde F$ in order to be able to apply the semi-smooth Newton method, which is introduced in the next section, see Definition \ref{def:Newton_derivative}.
\begin{assumption}
We assume that $\tilde F$ from \eqref{eq:defG} satisfies $\tilde F:U\rightarrow Y_q$ with $q > 2$ such that each component $\tilde F^\nu$ is semi-smooth and locally Lipschitz from $U$ to $L^q(\Omega)$ for all $u$.
\label{ass:Fpart1}
\end{assumption}

\subsection{The Semi-Smooth Newton Method}

Applying semi-smooth Newton methods requires the notion of semi-smoothness or Newton-differentiable functions. In this chapter let $U$ denote an arbitrary Banach space.
\begin{definition}[\bf{Newton derivative}]\label{def:Newton_derivative}
Let $U,Y$ be Banach spaces. The mapping $G\colon U \rightarrow Y$ is called Newton differentiable or semi-smooth if there exists a linear and continuous mapping $D_N G\colon U \rightarrow \mathcal{L}(U, Y)$ such that
\begin{equation}\label{eq:Newton_derivative}
\norm{G(u+v)-G(u)- D_N G(u+v)v}_Y=o(\|v\|_U)
\end{equation}
for every $u\in U$. The mapping $D_N G$ is called the \emph{Newton derivative} of $G$.
\end{definition}
Let us present a well-known class of functions which are semi-smooth. 
\begin{lemma}\label{lemma:Frechet-semismooth}
Let $U,Y$ denote Banach spaces. Every Fr\'echet differentiable function $F\colon U\rightarrow Y$ with continuous Fr\'echet derivative $F'$ is Newton differentiable with Newton derivative $F'$.
\end{lemma}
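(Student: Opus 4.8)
The plan is to verify Definition \ref{def:Newton_derivative} directly with the candidate Newton derivative $D_N F := F'$. Thus I must show that for every fixed $u \in U$ one has
\[
\norm{F(u+v) - F(u) - F'(u+v)v}_Y = o(\norm{v}_U).
\]
The only subtlety compared with the ordinary Fr\'echet difference quotient is that here the derivative is evaluated at the shifted point $u+v$ rather than at $u$; bridging this mismatch is exactly where the continuity of $F'$ enters, and I expect it to be the only nonroutine step.

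First I would insert and subtract the Fr\'echet linearization at $u$, writing
\[
F(u+v) - F(u) - F'(u+v)v = \big[ F(u+v) - F(u) - F'(u)v \big] + \big[ F'(u) - F'(u+v) \big] v.
\]
The first bracket is $o(\norm{v}_U)$ by the assumed Fr\'echet differentiability of $F$ at $u$, so it only remains to control the second bracket. For that correction term I would estimate, using the definition of the operator norm in $\mathcal{L}(U,Y)$,
\[
\norm{\big[ F'(u) - F'(u+v) \big] v}_Y \leq \norm{F'(u) - F'(u+v)}_{\mathcal{L}(U,Y)}\, \norm{v}_U.
\]
Since $F'$ is continuous, the prefactor $\norm{F'(u) - F'(u+v)}_{\mathcal{L}(U,Y)}$ tends to $0$ as $\norm{v}_U \to 0$, so this contribution is likewise $o(\norm{v}_U)$. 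Adding the two estimates yields the claim for arbitrary $u \in U$, and since $F'$ is by hypothesis a continuous map into $\mathcal{L}(U,Y)$ it qualifies as the Newton derivative in the sense of Definition \ref{def:Newton_derivative}.

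The argument is essentially immediate once the splitting is in place; the genuine content is the observation that the Newton-derivative definition samples $F'$ at $u+v$, which is harmless precisely because $F'$ is continuous. An alternative route would replace the splitting by the mean-value representation $F(u+v)-F(u) = \int_0^1 F'(u+tv)v\,\mathrm{d}t$ and bound $\sup_{t\in[0,1]} \norm{F'(u+tv)-F'(u+v)}_{\mathcal{L}(U,Y)}$ via continuity on the segment $[0,1]$, but this requires Bochner integration in $Y$ and is therefore less economical than the direct estimate above.
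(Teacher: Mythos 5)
Your proof is correct and complete. The paper itself gives no proof of this lemma --- it is stated as a well-known fact (the standard reference being \cite[Theorem 2.10]{HinzePinnauUlbrich2009optimization} or the corresponding result in Ito--Kunisch) --- so there is no paper argument to compare against; your splitting
\[
F(u+v) - F(u) - F'(u+v)v = \bigl[ F(u+v) - F(u) - F'(u)v \bigr] + \bigl[ F'(u) - F'(u+v) \bigr] v
\]
is exactly the standard textbook proof, and both estimates are carried out correctly. One small observation worth recording: your argument only invokes continuity of $F'$ \emph{at the point} $u$, not on all of $U$, so the hypothesis could be weakened to pointwise continuity of the derivative; likewise your closing remark is accurate that the mean-value alternative buys nothing here and costs a Bochner integral. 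Your identification of the real content of the lemma --- that Definition \ref{def:Newton_derivative} samples the derivative at the shifted point $u+v$, and continuity is precisely what bridges this --- is the right way to read the statement.
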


The semi-smooth Newton method for finding a solution of $G(u) = 0$ is given in the following algorithm.
\begin{algorithm}[H]
    \caption{Semi-smooth Newton method}
    \label{alg:semi-smooth-gen}
    \begin{algorithmic}
     \item[] Choose $u_0\in U$
     \item[   ] For $k=0,1,2,...$ repeat:
        \item[1:] Compute $\delta_k$ by solving \[D_N G(u_k)\delta_k=-G(u_k).\]
        \item [2:] Set $u_{k+1}:=u_k+\delta_k$.
    \end{algorithmic}
\end{algorithm}

Let $\bar u$ solve $G(u)=0$, where $G\colon U\rightarrow Y$. Let us assume that the mappings $D_N G(u_k) \in \mathcal{L}(U,Y)$ are invertible. Applying the definition of a Newton step we get
\begin{align*}
%
\norm{u_{k+1}-\bar{u}}_U \leq \norm{D_N G(u_k) ^{-1}}_{Y\rightarrow U}\norm{D_N G(u_k) (u_k-\bar{u})-\left(G(u_k)-G(\bar{u})\right)}_Y.
\end{align*}
Based on this estimate it is well known and easy to see (\cite[Theorem 8.16]{ItoKunisch2008LagMult}), that Algorithm \ref{alg:semi-smooth-gen} converges superlinearly to a solution $\bar{u}$ of $G(u) = 0$ if the following two conditions hold:
\begin{enumerate}
\item \emph{Approximation condition}: The mapping $G\colon U\rightarrow Y$ is Newton-differentiable at $\bu$ with Newton derivative $D_NG : U\rightarrow \mathcal{L}(U,Y)$.
\item \emph{Regularity condition}: There exists a constant $c>0$ and an $\varepsilon>0$ such that for every $u$ satisfying $\norm{u-\bar{u}}_U\leq\varepsilon$ all $D_N G(u)$ are invertible and $\norm{D_N G(u)^{-1}}_{Y\rightarrow U} \leq c$ holds.
\end{enumerate}

To end this section we will recall some properties of Newton differentiable functions, see \cite[Theorem 2.10]{HinzePinnauUlbrich2009optimization}.

\begin{lemma}\label{lemma:propNewFunc}
Let $U,Y,Z,U_i,Y_i$ be Banach spaces. 
\begin{itemize}[noitemsep]
\item[a)]If the operators $G_i\colon U\rightarrow Y_i$ are Newton differentiable at $u$ then $(G_1,G_2)$ is Newton differentiable at $u$.
\item[b)]If $G_i\colon U\rightarrow Y, i=1,2$ are Newton differentiable at $u$ then $G_1+G_2$ is Newton differentiable at $u$ with Newton derivate $(D_N G_1 + D_N G_2)$.
\item[c)] Let $G_1:Y\rightarrow Z$ and $G_2:U\rightarrow Y$ be Newton differentiable at $G_2(u)$ and $u$, respectively. Assume that $D_N G_1$ is bounded near $y=G_2(u)$ and that $G_2$ is Lipschitz-continuous near $u$. Then $G=G_1\circ G_2$ is Newton differentiable with
\[ D_N G(u)=  M_1M_2 \quad \text{with}  \quad M_1:=D_N G_1(G_2(u)), \; M_2:=D_N G_2(u).\]
\end{itemize}
\end{lemma}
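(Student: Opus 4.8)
The plan is to verify each of the three items directly from the definition of Newton differentiability in \eqref{eq:Newton_derivative}, exploiting that forming tuples, summing, and composing all interact cleanly with a remainder of order $o(\|v\|_U)$. Parts a) and b) are routine. For a), I would equip $Y_1 \times Y_2$ with the maximum of the two component norms; the residual of $(G_1,G_2)$ relative to the proposed Newton derivative $(D_N G_1, D_N G_2)$ then splits componentwise, and since each component is $o(\|v\|_U)$ by hypothesis, so is their maximum. For b), the triangle inequality bounds the residual of $G_1 + G_2$ relative to $D_N G_1 + D_N G_2$ by the sum of the two individual residuals, each of which is $o(\|v\|_U)$.

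The substance lies in the chain rule c). I would abbreviate $y := G_2(u)$ and $w := G_2(u+v) - G_2(u)$, so that $G(u+v) - G(u) = G_1(y+w) - G_1(y)$. Newton differentiability of $G_2$ at $u$ and of $G_1$ at $y$ provide the expansions
\[
w = D_N G_2(u+v)\, v + e_2(v), \qquad \|e_2(v)\|_Y = o(\|v\|_U),
\]
\[
G_1(y+w) - G_1(y) = D_N G_1(y+w)\, w + e_1(w), \qquad \|e_1(w)\|_Z = o(\|w\|_Y).
\]
Since $y + w = G_2(u+v)$, we have $D_N G_1(y+w) = D_N G_1(G_2(u+v))$, and substituting the first line into the second yields $D_N G_1(G_2(u+v))\, D_N G_2(u+v)\, v$ plus the two remainders $D_N G_1(G_2(u+v))\, e_2(v)$ and $e_1(w)$. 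This exhibits the Newton derivative as the map $\xi \mapsto D_N G_1(G_2(\xi))\, D_N G_2(\xi)$, whose value at the base point $u$ is the claimed $M_1 M_2$, so it remains only to show that both remainders are $o(\|v\|_U)$.

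This last step is the main obstacle, and it is exactly where the two extra hypotheses enter. For the first remainder I would invoke the assumed boundedness of $D_N G_1$ near $y = G_2(u)$: because $G_2$ is Lipschitz and hence continuous, $G_2(u+v) \to G_2(u)$ as $v \to 0$, so $\|D_N G_1(G_2(u+v))\|_{Y \to Z}$ stays bounded, and a bounded operator applied to $e_2(v) = o(\|v\|_U)$ remains $o(\|v\|_U)$. For the second remainder I would use the Lipschitz continuity of $G_2$ near $u$, which gives $\|w\|_Y \leq L\|v\|_U$; this simultaneously forces $w \to 0$ as $v \to 0$ and upgrades the estimate $\|e_1(w)\|_Z = o(\|w\|_Y)$ to $o(\|v\|_U)$. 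Combining the two bounds establishes the residual estimate \eqref{eq:Newton_derivative} for $G = G_1 \circ G_2$ with Newton derivative $M_1 M_2$, completing the proof.
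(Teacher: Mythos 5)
Your proof is correct and follows the same route as the source the paper relies on: the paper gives no in-text proof of this lemma but cites \cite[Theorem 2.10]{HinzePinnauUlbrich2009optimization}, and your argument for c) — expanding $w = G_2(u+v)-G_2(u)$ via the Newton derivative of $G_2$ at $u+v$, substituting into the expansion of $G_1$ at $y+w = G_2(u+v)$, and then killing the remainder $D_N G_1(G_2(u+v))\,e_2(v)$ by local boundedness of $D_N G_1$ and the remainder $e_1(w)$ by the Lipschitz bound $\|w\|_Y \leq L\|v\|_U$ — is exactly the standard decomposition used there. You also correctly evaluate the candidate Newton derivative at the perturbed point $u+v$, which is what Definition \ref{def:Newton_derivative} requires, so nothing is missing.
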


\subsection{Semi-Smoothness of the Projection Operator}
For our later application we will need semi-smoothness of the mapping 
\[
u\mapsto (\mu+\rho(Su-\psi))_+,
\]
see Section \ref{sec:subsec:NewtonIt}. Since $\mu$ is only a $L^2(\Omega)$ function we cannot expect from the known result \cite[Theorem 4.4]{MUlbrich2011SSN} that the mapping
\[
\max(0,\mu+\rho(Su-\psi))=\mu-\rho\psi + \max(-\mu+\rho\psi,Su)
\]
is semi-smooth from $L^q(\Omega)$ to $L^2(\Omega)$.
In \cite[Example 8.12]{ItoKunisch2008LagMult} Ito and Kunisch investigated the semi-smoothness of superposition operators 
$$F\colon L^q(\Omega)\rightarrow L^p(\Omega),\quad F(u)(x)=f(u(x)) \text{ for a.e. } x\in \Omega,$$ where $1\leq p<q\leq \infty$ and $f:\R\rightarrow \R$ is semi-smooth and globally Lipschitz continuous. However, due to the dependence of $a$ and $b$ on the $x$-variable the mapping $u\mapsto \max(a,\min(u,b))$ cannot be built via superposition. Nevertheless, since the regularity of the functions $a$ and $b$ isn't needed in the proof one can apply similar arguments. 
\begin{theorem}\label{theo:maxminsemismooth}
Let $a,b \in L^r(\Omega)$ with $a \leq b$ and $1 \leq p\leq r < q \leq \infty$. The mapping $m: L^q(\Omega) \to L^p(\Omega), \; u \mapsto \max( a, \min(u,b)  )$ is semi-smooth with Newton derivative
\begin{equation}\label{eq:proj_newton}
L^s(\Omega) \ni h(u)(x) = \begin{cases}
0 & \text{if} \quad u(x) \geq b(x),\\
1 & \text{if} \quad u(x) \in (a(x), b(x)),\\
0 & \text{if} \quad u(x) \leq a(x),
\end{cases}
\end{equation}
where $s$ is chosen such that $\frac{1}{p} = \frac{1}{s} + \frac{1}{q}$ holds.
\end{theorem}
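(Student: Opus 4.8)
The plan is to verify the Newton-differentiability estimate
\[
\norm{m(u+v) - m(u) - D_N m(u+v)\,v}_{L^p(\Omega)} = o(\norm{v}_{L^q(\Omega)})
\]
directly from Definition \ref{def:Newton_derivative}, where $D_N m(u)$ denotes the multiplication operator $v \mapsto h(u)\cdot v$ with $h(u)$ given by \eqref{eq:proj_newton}. First I would record that this operator is bounded from $L^q(\Omega)$ to $L^p(\Omega)$: since $|h(u)| \le 1$ and $\Omega$ is bounded we have $h(u) \in L^s(\Omega)$, and Hölder's inequality with $\frac1p = \frac1s + \frac1q$ gives $\norm{h(u)v}_{L^p} \le \norm{h(u)}_{L^s}\norm{v}_{L^q}$. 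This simultaneously shows well-definedness of the claimed derivative and explains the role of the exponent $s$.

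The core of the argument is a pointwise analysis. For fixed $x$ set $\phi_x(t) := \max(a(x), \min(t, b(x)))$, the metric projection onto $[a(x), b(x)]$, which is $1$-Lipschitz and piecewise affine with kinks at $a(x)$ and $b(x)$. Writing $w = u(x)$ and $\delta = v(x)$, the remainder density is $r(x) = \phi_x(w+\delta) - \phi_x(w) - h(u+v)(x)\,\delta$. A short case distinction according to the positions of $w$ and $w+\delta$ relative to $a(x), b(x)$ shows that $r(x) = 0$ whenever $w$ and $w+\delta$ lie in the same affine piece of $\phi_x$, and that otherwise $|r(x)| \le |\delta| = |v(x)|$. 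Hence $|r(x)| \le |v(x)|\,\mathbf 1_{M_v}(x)$, where $M_v$ is the \emph{crossing set} on which $w$ and $w+\delta$ belong to different affine pieces. Using Hölder's inequality with exponents $q/p$ and its conjugate (with the obvious modification for $q = \infty$) this yields
\[
\norm{r}_{L^p(\Omega)} \le \Big(\int_{M_v} |v|^p\,\dx\Big)^{1/p} \le \norm{v}_{L^q(\Omega)}\,|M_v|^{\frac1p - \frac1q},
\]
and since $p \le r < q$ forces $\frac1p - \frac1q > 0$, it remains to prove $|M_v| \to 0$ as $\norm{v}_{L^q} \to 0$. For this I would note that a crossing forces the segment between $w$ and $w+\delta$ to contain a kink, so that $M_v \subseteq \{x : \operatorname{dist}(u(x), \{a(x), b(x)\}) \le |v(x)|\}$. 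Convergence $\norm{v}_{L^q} \to 0$ implies convergence in measure, and along an a.e.-convergent subsequence the indicator of this set tends to $0$ a.e.\ on $\{\operatorname{dist}(u, \{a,b\}) > 0\}$; dominated convergence together with the usual subsequence principle then gives $|M_v| \to 0$ on that part.

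The step requiring the most care — and the main obstacle — is the set $\{u = a\} \cup \{u = b\}$ on which $\operatorname{dist}(u, \{a,b\}) = 0$, since it may have positive measure and the a.e.\ argument above fails there. Here I would refine the case distinction: if $u(x) = a(x) < b(x)$ (and symmetrically for $u = b$), then $r(x) \ne 0$ is only possible when $w+\delta$ overshoots the opposite kink, which forces $|v(x)| \ge b(x) - a(x) > 0$; on the degenerate set $\{a = b\}$ one checks directly that $\phi_x$ is constant and $r(x) = 0$. Thus the portion of $M_v$ sitting on $\{u \in \{a,b\}\}$ is contained in $\{|v| \ge b - a\}$ with $b - a > 0$ a.e.\ there, and dominated convergence again forces its measure to zero. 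Combining the two contributions yields $|M_v| \to 0$, and with the displayed Hölder bound this gives $\norm{r}_{L^p} = o(\norm{v}_{L^q})$, which is the desired semi-smoothness.
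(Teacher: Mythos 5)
Your proof is correct, but it follows a genuinely different route than the paper. The paper's proof is qualitative: it fixes a nullsequence $s_k \to 0$ in $L^q(\Omega)$, extracts an a.e.\ convergent subsequence, invokes the known semi-smoothness of the scalar function $t \mapsto \max(a,\min(t,b))$ to get pointwise convergence of the divided remainder $s_k(x)^{-1}\bigl(v_k(x)-v(x)-d_k(x)s_k(x)\bigr) \to 0$, dominates this quotient by the constant $2$ via nonexpansiveness, applies dominated convergence in $L^s(\Omega)$, and closes with H\"older in the form $\norm{r_k}_{L^p} \le \norm{s_k^{-1}r_k}_{L^s}\norm{s_k}_{L^q}$ plus the subsequence principle. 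You instead never divide by $v$ and never cite scalar semi-smoothness: your explicit case analysis of the piecewise affine projection establishes $|r(x)| \le |v(x)|\,\mathbf{1}_{M_v}(x)$ on the crossing set, and H\"older with exponents $q/p$ gives the quantitative bound $\norm{r}_{L^p} \le \norm{v}_{L^q}\,|M_v|^{1/p-1/q}$, reducing everything to $|M_v| \to 0$. This buys you two things the paper's argument does not make visible: a self-contained proof of the pointwise semi-smoothness (your case distinction is exactly a proof of the scalar result the paper cites), and an explicit rate statement showing that the $o(\norm{v}_{L^q})$ remainder is controlled by the measure of the set where the argument crosses a kink, with the norm gap entering precisely through the positive exponent $\frac1p - \frac1q$. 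The price is that you must treat the degenerate set $\{u=a\}\cup\{u=b\}$ (possibly of positive measure) by hand, which you do correctly: there a nonzero remainder forces an overshoot past the opposite kink, so $|v| \ge b-a > 0$ (and on $\{a=b\}$ the projection is constant with $h=0$, so $r=0$), and dominated convergence kills that contribution as well — the paper's route absorbs this case silently into the pointwise scalar semi-smoothness at kink points. The measure-theoretic scaffolding (a.e.\ subsequences, dominated convergence on the bounded domain, the subsequence principle to pass from subsequences to the full sequence) is shared by both proofs, and your observation that $r \ge p$ merely guarantees $m(u) \in L^p(\Omega)$ while $|h(u)|\le 1$ makes the multiplier bound via H\"older trivial is consistent with the role these hypotheses play in the paper.
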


\begin{proof}
A similar proof can be found in the PhD-Thesis \cite{SteckPhD2018}. Let $u \in L^q(\Omega)$ be arbitrary and $(s_k)_k \subset L^q(\Omega)$ be a (strong) nullsequence. Furthermore, define $u_k := u + s_k$ and $d_k := h(u_k)$. We have to check property \eqref{eq:Newton_derivative}.

First we extract a subsequence $(s_k)_{k \in I}$ with an index set $I$ such that $s_k(x) \to_I 0$ for almost all $x \in \Omega$. To shorten the notation we furthermore define $v := m(u)$ and $v_k := m(u_k)$. We now use
It is known \cite[Example 2.5]{HinzePinnauUlbrich2009optimization} that the mapping $\tilde{m}: \R \to \R, \; x \mapsto \max( a, \min(x,b)  )$ with $a,b\in\R$ is semi-smooth.
%
Hence, we obtain
\[
s_k(x)^{-1} (  v_k(x) - v(x) - d_k(x) s_k(x)  ) \to_I 0
\]
for almost all $x \in \Omega$. The quotient on the left side is understood to be zero whenever $s_k(x)=0$. Now we use that the projection $m$ is nonexpansive and obtain
\begin{align*}
|  v_k(x) - v(x) - d_k(x) s_k(x)  | & \leq |  v_k(x) - v(x) | +| d_k(x) s_k(x)  |\\
&\leq |u(x) + s_k(x) - u(x)| + |s_k(x)|\\
&\leq 2 |s_k(x)|.
\end{align*}
By applying  Lebesgue’s dominated convergence theorem we obtain 
\[
s_k^{-1} (  v_k - v - d_k s_k  ) \to_I 0
\]
in $L^r(\Omega)$ for all $r \in [1, \infty)$. Hence, by applying Hölder's inequality we get with $\frac{1}{p} = \frac{1}{s} + \frac{1}{q}$
\begin{equation}\label{eq:proj_proof_1}
\frac{\|  v_k - v - d_k s_k  \|_{L^p(\Omega)}}{\|s_k\|_{L^q(\Omega)}} \leq \|s_k^{-1} (  v_k - v - d_k s_k  ) \|_{L^s(\Omega)} \to_I 0.
\end{equation}
Since this argumentation can be repeated for any subsequence of $(s_k)_k$ the limit in \eqref{eq:proj_proof_1} holds in fact for the whole sequence.
\end{proof}
 
In the same manner we obtain the following result.
\begin{lemma}\label{lemma:max_semismooth}
Let $a \in L^r(\Omega)$ and $1 \leq p \leq r< q \leq \infty$. The mapping $m: L^q(\Omega) \to L^p(\Omega), \; u \mapsto \max( a, u )$ is semi-smooth with Newton derivative
\begin{equation}\label{eq:proj_max}
L^s(\Omega) \ni h(u)(x) = \begin{cases}
1 & \text{if} \quad u(x) > a(x),\\
0 & \text{if} \quad u(x) \leq a(x),
\end{cases}
\end{equation}
where $s$ is chosen such that $\frac{1}{p} = \frac{1}{s} + \frac{1}{q}$ holds.
\end{lemma}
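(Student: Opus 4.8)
The plan is to transcribe the proof of Theorem \ref{theo:maxminsemismooth} almost verbatim, simply dropping the upper obstacle $b$, since $\max(a,\cdot)$ is again scalar, nonexpansive and piecewise affine. First I would fix an arbitrary $u\in L^q(\Omega)$ and a strong nullsequence $(s_k)_k\subset L^q(\Omega)$, set $u_k:=u+s_k$, $v:=m(u)$, $v_k:=m(u_k)$ and $d_k:=h(u_k)$ with $h$ the candidate from \eqref{eq:proj_max}, so that verifying the defining property \eqref{eq:Newton_derivative} reduces to showing
\[
\frac{\norm{v_k-v-d_k s_k}_{L^p(\Omega)}}{\norm{s_k}_{L^q(\Omega)}}\to 0.
\]
Before that I would record well-definedness: on the bounded domain $\Omega$ one has $L^r(\Omega)\hookrightarrow L^p(\Omega)$ and $L^q(\Omega)\hookrightarrow L^p(\Omega)$ because $p\leq r$ and $p\leq q$, and since $|\max(a,u)|\leq |a|+|u|$ this yields $m(u)\in L^p(\Omega)$, so $m\colon L^q(\Omega)\to L^p(\Omega)$ makes sense.

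Next I would pass to a subsequence indexed by $I$ along which $s_k(x)\to 0$ for almost every $x\in\Omega$, which is available because $L^q$-convergence forces a.e.\ convergence of some subsequence. The scalar map $\tilde m\colon \R\to\R$, $t\mapsto\max(a,t)$, is piecewise affine, hence semi-smooth, with the one-sided slope encoded exactly by $h$; applying its pointwise semi-smoothness then gives
\[
s_k(x)^{-1}\big(v_k(x)-v(x)-d_k(x)s_k(x)\big)\to_I 0
\]
for almost every $x\in\Omega$, where the quotient is read as zero whenever $s_k(x)=0$.

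The central step is to upgrade this pointwise statement to convergence in $L^s(\Omega)$, with $s$ determined by $\tfrac1p=\tfrac1s+\tfrac1q$. Since $t\mapsto\max(a,t)$ is nonexpansive and $d_k\in\{0,1\}$ pointwise, I would estimate
\[
|v_k(x)-v(x)-d_k(x)s_k(x)|\leq |v_k(x)-v(x)|+|d_k(x)s_k(x)|\leq 2|s_k(x)|,
\]
so the difference quotient is dominated by the constant $2$, which lies in every $L^t(\Omega)$, $t\in[1,\infty)$, as $\Omega$ has finite measure. Lebesgue's dominated convergence theorem then delivers $s_k^{-1}(v_k-v-d_ks_k)\to_I 0$ in $L^t(\Omega)$ for all finite $t$, in particular in $L^s(\Omega)$, and a single use of Hölder's inequality with exponents $s$ and $q$ gives
\[
\frac{\norm{v_k-v-d_k s_k}_{L^p(\Omega)}}{\norm{s_k}_{L^q(\Omega)}}\leq \norm{s_k^{-1}(v_k-v-d_k s_k)}_{L^s(\Omega)}\to_I 0.
\]
Since the same reasoning applies to every subsequence of $(s_k)_k$, the limit holds for the whole sequence. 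The only genuinely delicate point is this norm-gap mechanism: convergence takes place in $L^s$ rather than $L^q$, and dominated convergence against the bound $2$ is legitimate only because the strict inequality $p\leq r<q$ forces $\tfrac1s=\tfrac1p-\tfrac1q>0$, i.e.\ $s<\infty$. The regularity exponent $r$ of $a$ itself plays no role in the convergence estimate; it enters solely to ensure that $m$ is well defined into $L^p(\Omega)$.
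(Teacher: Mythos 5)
Your proposal is correct and coincides with the paper's own argument: the paper proves Lemma \ref{lemma:max_semismooth} ``in the same manner'' as Theorem \ref{theo:maxminsemismooth}, i.e.\ exactly your route via an a.e.\ convergent subsequence, scalar semi-smoothness of the pointwise max, the nonexpansiveness bound $2|s_k(x)|$, dominated convergence in $L^s(\Omega)$, H\"older with $\tfrac1p=\tfrac1s+\tfrac1q$, and the subsequence--subsequence principle. Your added remarks on well-definedness of $m$ and on the role of $r$ are consistent with the paper's observation that the regularity of $a$ is not needed in the convergence estimate.
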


Note that the norm gap $p < q$ is indispensable for Newton differentiability of the projection operator, see for instance \cite[Example 8.14]{ItoKunisch2008LagMult}. Hence, the functions defined in \eqref{eq:proj_newton} and \eqref{eq:proj_max} can in general not serve as a Newton derivative for $m\colon L^2(\Omega)\rightarrow L^2(\Omega)$, see \cite[Proposition 4.1]{HintermItoKunisch2002primalDualActiveSetAsSSN}.
This causes trouble proving superlinear convergence since we cannot expect that the approximation condition holds. To bridge this norm gap, one needs additional structure. For problems that involve partial differential equations this structure is often given by smoothing properties of the corresponding solution operators.
To finish, let us briefly comment on the semi-smoothness of the projection operator $P_{\Uad} : L^q(\Omega)^N \to L^2(\Omega)^N$.
 The mapping
\[
\Pi_\nu\colon L^q(\Omega)^N \to L^q(\Omega), \; u \mapsto u^\nu.
\]
is linear and Fr\'echet differentiable, hence semi-smooth by Lemma \ref{lemma:Frechet-semismooth}.  Applying the chain rule (Lemma \ref{lemma:propNewFunc} c)) we now obtain that
\[
P_{\Uad}^\nu(u) = \min(\max(a^\nu,  \Pi_\nu(u)  ) ,b^\nu) 
\]
is a composition of semi-smooth functions, hence semi-smooth from $L^q(\Omega)^N \to L^2(\Omega)$, see 
Lemma \ref{theo:maxminsemismooth}.
Using Lemma \ref{lemma:propNewFunc} a) we obtain that $P_{\Uad}$ is semi-smooth from $L^q(\Omega)^N \to L^2(\Omega)^N$.

\subsection{Convergence Analysis}

To simplify our notation let us introduce the following notation. Let $d \in U$ with components $d^\nu \in L^2(\Omega)$ and  $M \in \mathcal{L}(U,U)$. We define the product $d \cdot M =dM \in \mathcal{L}(U,U)$ in a component-wise manner
\begin{equation}\label{eq:diag_def}
\left( d \cdot M (u) \right)^\nu:= d^\nu M^\nu(u) \in L^2(\Omega).
\end{equation}
Hence, $ d \cdot M: U \to U$. In a similar way we define $d \cdot u \in U$ for some $u \in U$. The chain rule from Lemma \ref{lemma:propNewFunc} c) allows us to show semi-smoothness of $G$ from \eqref{eq:defG}.

\begin{lemma}\label{lem:chainrule}
Let Assumption \ref{ass:Fpart1} be satisfied. Then the operator $G:U\rightarrow U$ is Newton differentiable with Newton derivative
\begin{align*}
D_N G(u) =  \mathrm{Id} + \frac{1}{\alpha} \chi_{\Iset}(u) D_N \tilde{F}(u)\in \mathcal{L}(U,U) , 
\end{align*}
where the components of $\chi_I(u)$ are given as
\begin{align}
(\chi_{\Iset}(u))^\nu(x)
:= \begin{cases}
0 & \text{if} \quad - \frac{1}{\alpha} \tilde F(u)^\nu (x) \geq b^\nu(x),\\
1 & \text{if} \quad - \frac{1}{\alpha} \tilde F(u)^\nu (x) \in (a^\nu(x), b^\nu(x) ),\\
0 & \text{if} \quad - \frac{1}{\alpha} \tilde F(u)^\nu (x) \leq a^\nu(x),\\
\end{cases} 
\label{eq:defd}
\end{align}
for almost all $x\in \Omega$.
\end{lemma}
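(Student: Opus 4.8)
The plan is to recognize $G$ as a sum of the identity and a composition, and then to assemble its Newton derivative entirely from the calculus rules collected in Lemma \ref{lemma:propNewFunc}. Writing $G(u) = \mathrm{Id}(u) - (P_{\Uad}\circ T)(u)$ with the affine inner map $T(u) := -\frac{1}{\alpha}\tilde{F}(u)$, I would treat the two summands separately and recombine them via the sum rule (Lemma \ref{lemma:propNewFunc} b)).

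First I would handle $T$. By Assumption \ref{ass:Fpart1} each component $\tilde{F}^\nu$ is semi-smooth and locally Lipschitz from $U$ to $L^q(\Omega)$, so collecting the components with Lemma \ref{lemma:propNewFunc} a) shows that $T\colon U \to Y_q$ is semi-smooth with Newton derivative $D_N T(u) = -\frac{1}{\alpha} D_N \tilde{F}(u)$ and is locally Lipschitz near every $u$. The identity map is linear and continuous, hence Fr\'echet differentiable, and by Lemma \ref{lemma:Frechet-semismooth} it is semi-smooth with Newton derivative $\mathrm{Id}$. Next I would invoke the semi-smoothness of $P_{\Uad}\colon Y_q = L^q(\Omega)^N \to U = L^2(\Omega)^N$ already established in the preceding discussion, together with the explicit Newton derivative furnished by Theorem \ref{theo:maxminsemismooth} applied componentwise to $P_{\Uad}^\nu(w) = \min(\max(a^\nu,\Pi_\nu w),b^\nu)$. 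Evaluated at $w = T(u) = -\frac{1}{\alpha}\tilde{F}(u)$, the $\nu$-th component of that Newton derivative is pointwise multiplication by the function equal to $1$ on $\{a^\nu < w^\nu < b^\nu\}$ and $0$ elsewhere; this is precisely $(\chi_{\Iset}(u))^\nu$ from \eqref{eq:defd}.

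The chain rule (Lemma \ref{lemma:propNewFunc} c)) then yields, with $M_1 = D_N P_{\Uad}(T(u)) = \chi_{\Iset}(u)$ and $M_2 = D_N T(u)$,
\[
D_N(P_{\Uad}\circ T)(u) = \chi_{\Iset}(u)\cdot D_N T(u) = -\frac{1}{\alpha}\chi_{\Iset}(u) D_N \tilde{F}(u),
\]
where the product is read in the componentwise sense of \eqref{eq:diag_def}. Finally the sum rule gives $D_N G(u) = \mathrm{Id} - D_N(P_{\Uad}\circ T)(u) = \mathrm{Id} + \frac{1}{\alpha}\chi_{\Iset}(u) D_N \tilde{F}(u)$, which is the asserted formula.

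The step requiring the most care is the verification of the hypotheses of the chain rule at the composition point: that $T$ is Lipschitz near $u$ (supplied by Assumption \ref{ass:Fpart1}) and, crucially, that $D_N P_{\Uad}$ is bounded near $T(u)$. The latter is exactly what forces us to view $P_{\Uad}$ as mapping out of $Y_q$ with $q > 2$ rather than out of $U$: the norm gap is indispensable for semi-smoothness of the projection (cf.\ the discussion following Lemma \ref{lemma:max_semismooth}), while the $\{0,1\}$-valued multiplication operators $\chi_{\Iset}(u)$ are uniformly bounded from $L^q(\Omega)^N$ into $L^2(\Omega)^N$, which is precisely the boundedness the chain rule demands. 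Once these hypotheses are in place, the identification of $M_1$ with $\chi_{\Iset}(u)$ and the bookkeeping of the two summands are routine.
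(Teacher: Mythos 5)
Your proposal is correct and takes essentially the same route as the paper: the paper's (one-line) proof likewise combines Assumption \ref{ass:Fpart1}, Lemma \ref{lemma:Frechet-semismooth} and the chain rule of Lemma \ref{lemma:propNewFunc} c), with the Newton derivative of $P_{\Uad}$ identified as multiplication by $\chi_{\Iset}(u)$ via the projection results. You have merely made explicit the routine hypothesis checks (local Lipschitz continuity of $u \mapsto -\frac{1}{\alpha}\tilde F(u)$ and uniform boundedness of the $\{0,1\}$-valued multipliers from $L^q(\Omega)^N$ to $L^2(\Omega)^N$) that the paper leaves implicit, and you draw the projection's derivative from Theorem \ref{theo:maxminsemismooth} rather than the cited external example, which matches the paper's own preceding discussion.
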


\begin{proof}This follows directly by Assumption \ref{ass:Fpart1}, the chain rule (Lemma \ref{lemma:propNewFunc} c)) and Lemma \ref{lemma:Frechet-semismooth}. Further, the representation of the derivative follows immediately with the chain rule where the derivative of $P_{\Uad}$ can be deduced from \cite[Example 2.5]{HinzePinnauUlbrich2009optimization}.
\end{proof}
Due to Assumption \ref{ass:Fpart1} the operator $G$ from \eqref{eq:defG} already satisfies the approximation condition. It remains to check on the regularity condition. For a given iterate $u_k$ let us define $\chi_{\Iset_k} := \chi_{\Iset}(u_k) \in U$. Let us now consider the bilinear form 
\begin{align}\label{eq:bilinform}
 a(w,v):=\left(\left(\mathrm{Id}+\frac{1}{\alpha}\chi_{\Iset_k} D_N \tilde{F}(u_k) \chi_{\Iset_k}\right)w,v\right)_U. 
\end{align}
We make the following assumption.
\begin{assumption}\label{ass:bilinform}
Assume that the bilinear form \eqref{eq:bilinform} is coercive for all $u_k \in U$, i.e. there exists a constant $c>0$ such that for all $w \in U$ it holds $a(w,w)\geq c\norm{w}^2_U$.
\end{assumption}

In fact, Assumption \ref{ass:bilinform} is obviously satisfied if $D_N \tilde{F}(u_k) $ is positive semidefinite with respect to the scalar product in $U$, i.e.,
\[
(D_N \tilde{F}(u_k)  w,w)_U \geq 0 \qquad \forall w\in U
\]
holds. Furthermore, it is well known \cite[Proposition 4.1.6]{Burachik2008Setvalued} that if $\tilde{F}:U\rightarrow Y_q$ is G\^ateaux differentiable for all $u\in U$ and monotone, then the G\^ateaux derivative $D\tilde{F}$ is positive semidefinite for every $u$.
In the next section we will explicitly show that the needed assumptions for superlinear convergence are satisfied for our NEP. Furthermore, please note that the structure of the first part of the bilinear form is very similar to the structure of the Newton derivative of $G$. However, the additional characteristic function available in the bilinear form allows us to prove superlinear convergence. This is part of the next theorem.

\begin{theorem} \label{theo:convSSNgen} 
Let $\bar{u}$ solve $G(u)=0$ with $G$ as given in \eqref{eq:defG}. Let Assumption \ref{ass:Fpart1} and \ref{ass:bilinform} be satisfied. If $\norm{u_0-\bar{u}}_U$ is sufficiently small the iterates $u_k$ from Algorithm \ref{alg:semi-smooth-gen} converge superlinear to $\bar u$.
\end{theorem}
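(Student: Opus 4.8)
The plan is to verify the two hypotheses of the abstract convergence result cited after Algorithm~\ref{alg:semi-smooth-gen}, namely the \emph{approximation condition} and the \emph{regularity condition}, since once both hold superlinear convergence follows from \cite[Theorem 8.16]{ItoKunisch2008LagMult} together with the estimate
\[
\norm{u_{k+1}-\bar{u}}_U \leq \norm{D_N G(u_k)^{-1}}_{Y\rightarrow U}\,\norm{D_N G(u_k)(u_k-\bar{u})-\left(G(u_k)-G(\bar{u})\right)}_U
\]
already displayed in the text. The approximation condition is immediate: by Lemma~\ref{lem:chainrule} (which rests on Assumption~\ref{ass:Fpart1}), $G$ is Newton differentiable with the explicit derivative $D_N G(u)=\mathrm{Id}+\frac{1}{\alpha}\chi_{\Iset}(u)D_N\tilde F(u)$, so I would simply invoke that lemma.

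The real work is the regularity condition, i.e.\ showing that near $\bar u$ every $D_N G(u_k)$ is invertible with a \emph{uniformly} bounded inverse $\norm{D_N G(u_k)^{-1}}_{Y\to U}\leq c$. First I would exploit the pointwise structure of $\chi_{\Iset_k}:=\chi_{\Iset}(u_k)$, which acts component-wise as a multiplication by a function taking only the values $0$ and $1$; it is therefore an idempotent projection on $U$. Writing $w=\chi_{\Iset_k}w+(\mathrm{Id}-\chi_{\Iset_k})w$ and applying the operator $D_N G(u_k)=\mathrm{Id}+\frac{1}{\alpha}\chi_{\Iset_k}D_N\tilde F(u_k)$ to the equation $D_N G(u_k)\delta=g$, I would split the system on the active part (where $\chi_{\Iset_k}=0$) and the inactive part. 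On the active part the operator reduces to the identity, so that component of $\delta$ is determined directly. Substituting this back leaves an equation for the inactive component $\chi_{\Iset_k}\delta$ that is governed precisely by the restricted operator $\mathrm{Id}+\frac{1}{\alpha}\chi_{\Iset_k}D_N\tilde F(u_k)\chi_{\Iset_k}$ appearing in the bilinear form \eqref{eq:bilinform}.

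At this point I would invoke Assumption~\ref{ass:bilinform}: the coercivity $a(w,w)\geq c\norm{w}_U^2$, uniform in $u_k$, means that the restricted operator is boundedly invertible by Lax--Milgram, with inverse bounded by $c^{-1}$ independently of $k$. Combining the trivial inverse on the active part with the uniformly bounded inverse on the inactive part yields invertibility of the full $D_N G(u_k)$ with a bound independent of $k$, which is exactly the regularity condition. Here I must be careful about the two norms in play: coercivity holds with respect to the $U=L^2(\Omega)^N$ inner product, but the regularity condition asks for $\norm{D_N G(u_k)^{-1}}_{Y\to U}$; since $Y_q\hookrightarrow U$ for $q\geq 2$, the $U$-bound dominates the $Y_q$-to-$U$ bound, so this causes no difficulty.

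The main obstacle is the algebraic reduction in the second paragraph: the Newton derivative contains the asymmetric operator $\chi_{\Iset_k}D_N\tilde F(u_k)$ (with the multiplication only on the left), whereas the coercivity hypothesis concerns the symmetrically sandwiched $\chi_{\Iset_k}D_N\tilde F(u_k)\chi_{\Iset_k}$. The delicate point is to show that solving $D_N G(u_k)\delta=g$ really decouples so that the inactive block is exactly the sandwiched operator and the active block contributes only the identity; the text's remark that ``the additional characteristic function available in the bilinear form allows us to prove superlinear convergence'' signals that this projection/decoupling argument is the crux. Once the decoupling is established the uniform bound follows mechanically, and superlinear convergence is then a direct consequence of the cited abstract theorem for $u_0$ sufficiently close to $\bar u$.
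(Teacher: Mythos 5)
Your proposal is correct and follows essentially the same route as the paper's appendix proof: the approximation condition via Lemma \ref{lem:chainrule} (Assumption \ref{ass:Fpart1}), and the regularity condition by exploiting the idempotent $0$/$1$ structure of $\chi_{\Iset_k}$ to decouple the Newton system into a trivially solvable active part and an inactive part governed exactly by the sandwiched operator $\mathrm{Id}+\frac{1}{\alpha}\chi_{\Iset_k}D_N\tilde F(u_k)\chi_{\Iset_k}$, to which Lax--Milgram applies with the uniform coercivity of Assumption \ref{ass:bilinform}. The only (cosmetic) difference is that the paper carries out this decoupling on the iterate $u_{k+1}$ using the explicit projection formula ($u_{k+1}=u_a$ on $\Anua_k$, $u_{k+1}=u_b$ on $\Anub_k$), whereas you decouple the increment equation $D_N G(u_k)\delta=g$ directly.
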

\begin{proof}
The proof uses standard arguments for semi-smooth Newton methods. For the readers convenience it can therefore be found in the appendix.
\end{proof}

\section{Newton Iteration for the Non-Reducible NEP} \label{sec:NewtonNonRegNEP}

We now want to study the semi-smooth Newton method applied to problem \eqref{eq:augNEP_offset}.

\subsection{Newton Iteration and Convergence Result} \label{sec:subsec:NewtonIt}

We aim at solving
\begin{align}\label{eq:GforNewtonit}
G(u):= u-P_{\Uad}\left(-\frac{1}{\alpha}\tilde F(u)\right) =0,
\end{align} 
where $\tilde{F}:U\rightarrow Y_q$ and the $\nu$-th component $\tilde{F}^\nu$ is given as the adjoint state
\[
\tilde F^\nu(u) = p^\nu(u):=  S_\nu^\ast \left(\chi_\nu (Su - y_d^\nu) +\bmu(u)\right). 
\]

\begin{lemma}
The operator $
u \mapsto  \left(  S_\nu^\ast \left(\chi_\nu (Su - y_d^\nu)\right)+\bmu(u) \right)_{\nu=1}^N
$
satisfies Assumption \ref{ass:Fpart1}.
\end{lemma}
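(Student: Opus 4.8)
The plan is to verify the three requirements of Assumption~\ref{ass:Fpart1} --- mapping into $Y_q$ for some $q>2$, semi-smoothness, and local Lipschitz continuity from $U$ to $L^q(\Omega)$ --- componentwise, after splitting
\[
\tilde F^\nu(u) = T^\nu(u) + \bmu(u), \qquad T^\nu(u) := S_\nu^\ast\big(\chi_\nu(Su - y_d^\nu)\big),
\]
into a tracking part $T^\nu$ and the multiplier part $\bmu(u)=(\mu+\rho(Su-\psi))_+$, and recombining with the sum rule (Lemma~\ref{lemma:propNewFunc} b)). First I would dispatch $T^\nu$. The map $u\mapsto Su$ is linear and bounded from $U$ into $L^q(\Omega)$ with $q>2$ by the smoothing property of $S$ recorded in Section~\ref{sec:setting}; multiplication by $\chi_\nu\in L^\infty(\Omega)$ and subtraction of $y_d^\nu$ are affine and bounded on $L^2(\Omega)$; and the adjoint $S_\nu^\ast$ inherits the same smoothing, mapping $L^2(\Omega)$ into $L^q(\Omega)$. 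Hence $T^\nu$ is an affine-linear bounded operator $U\to L^q(\Omega)$, therefore Fr\'echet differentiable with continuous (constant) derivative, so semi-smooth by Lemma~\ref{lemma:Frechet-semismooth} and globally Lipschitz. This part is routine.

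The substantial part is the multiplier term $\bmu$. Using the pointwise identity $\max(0,c+x)=\max(-c,x)+c$ I would rewrite it as
\[
\bmu(u) = \max\big(\rho\psi-\mu,\ \rho\,Su\big) + (\mu-\rho\psi),
\]
so that the argument $u\mapsto\rho\,Su$ is linear and bounded from $U$ into $L^q(\Omega)$, hence Lipschitz and semi-smooth, while the outer map $v\mapsto\max(\rho\psi-\mu,v)$ is precisely of the type covered by Lemma~\ref{lemma:max_semismooth} with fixed lower bound $a:=\rho\psi-\mu\in L^2(\Omega)$ (since $\psi\in C(\bar\Omega)$ and $\mu\in L^2(\Omega)$) and Newton derivative \eqref{eq:proj_max}. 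Composing these by the chain rule (Lemma~\ref{lemma:propNewFunc} c)) and adding the constant $\mu-\rho\psi$ would yield semi-smoothness of $\bmu$; local Lipschitz continuity follows because the pointwise positive part is a $1$-Lipschitz Nemytskii operator and $u\mapsto\mu+\rho(Su-\psi)$ is affine and Lipschitz.

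The main obstacle is the target space, and it is exactly here that the statement as worded becomes delicate. Since the multiplier $\mu$ is assumed only in $L^2(\Omega)$, the lower bound $a=\rho\psi-\mu$ lies only in $L^2(\Omega)$, forcing $r=2$ in Lemma~\ref{lemma:max_semismooth} and hence a target exponent $p\le 2$: the term $\bmu(u)$ is guaranteed only in $L^2(\Omega)$, not in $L^q(\Omega)$ for $q>2$. Consequently, with $\bmu$ written \emph{outside} the adjoint, the sum $T^\nu(u)+\bmu(u)$ need only lie in $L^2(\Omega)$, and the range $Y_q$ with $q>2$ demanded by Assumption~\ref{ass:Fpart1} cannot be reached in general. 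I would therefore expect the argument to close only once the smoothing operator $S_\nu^\ast$ also acts on the multiplier, i.e.\ for the reading $\tilde F^\nu(u)=S_\nu^\ast\big(\chi_\nu(Su-y_d^\nu)+\bmu(u)\big)$ employed in the definition of the adjoint state $p^\nu$ above: then $\bmu(u)\in L^2(\Omega)$ is mapped by $S_\nu^\ast$ into $L^q(\Omega)$, closing the norm gap. Clarifying this point --- whether the gap can be bridged as literally written or whether the adjoint must enclose $\bmu$ --- is the crux of the proof.
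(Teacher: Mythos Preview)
Your approach is correct and matches the paper's: both split into the affine tracking part (Fr\'echet differentiable, hence semi-smooth and Lipschitz) and the multiplier part, and both invoke Lemma~\ref{lemma:max_semismooth} together with the smoothing of $S$ for semi-smoothness of $u\mapsto\bmu(u)$ from $U$ to $L^2(\Omega)$, then recover the $L^q$-range through $S_\nu^\ast$. You are also right about the placement of $\bmu$: the paper's own proof silently works with $p^\nu(u)=S_\nu^\ast\big(\chi_\nu(Su-y_d^\nu)+\bmu(u)\big)$---i.e.\ with $\bmu$ \emph{inside} the adjoint, exactly as in the definition of $\tilde F^\nu$---so the discrepancy you flagged in the displayed statement is a typo, not a gap in your argument.
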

\begin{proof}
We note that $S_\nu^\ast : H^{-1}(\Omega) \to (H^{-1})^\ast = H_0^1(\Omega)$. 
Due to embedding theorems we obtain that
%
%
$p^\nu(u)= S_\nu^*(\chi_\nu (Su-y_d^\nu)+(\mu+\rho(Su-\psi))_+)$ maps from $U$ to $L^q(\Omega)$ with some $q>2$. 
Splitting the adjoint state in two parts
\[
p^\nu(u):= S_\nu^*\chi_\nu(Su-y_d^\nu)+S_\nu^*(\mu+\rho(Su-\psi))_+. 
\]
we see clearly that the first part is continuously Fr\'echet differentiable, hence semi-smooth due to Lemma \ref{lemma:Frechet-semismooth} and Lipschitz continuous from $U$ to $L^q(\Omega)$. Recall that we need the norm gap in order to prove semi-smoothness of the projection operator. For the second part, we know from Lemma \ref{lemma:max_semismooth} and the regularity conditions on $S$ that the mapping $u\mapsto \max(0,\mu+\rho(Su-\psi))$ is semi-smooth from $L^2(\Omega)^N$ to $L^2(\Omega)$. Further it is well known that it is Lipschitz continuous from $L^2(\Omega)^N$ to $L^2(\Omega)$. 
Since $S_\nu^*$ maps linear to $L^q(\Omega)$ we gain semi-smoothness and Lipschitz continuity of the whole second part.
\end{proof}

Let us analyze the problem in more detail. The Newton-derivative $D_N G$ of $G$ at the point $u_k$ is of the form
\[
D_N G(u_k) = \mathrm{Id} + \frac{1}{\alpha} \chi_{\Iset_k} D_N \tilde{F}(u_k).
\]
In order to compute the derivative of $\tilde F^\nu(u_k)$ recall
\[
\bar\mu(u_k) = (\mu+\rho(Su_k-\psi))_+.
\]
A Newton derivative of $\bar \mu(u_k)$ is given by
\begin{align*}
D_N \bmu(u_k) = \chi_{\Yset_k}\rho S \quad \text{ with } \quad  \Yset_k:=\left\lbrace x\in \Omega \colon (\mu+\rho(Su_k-\psi))(x) > 0 \right\rbrace.
\end{align*}
Hence, we obtain that the $\nu$-th component of the Newton derivative $D_N\tilde{F}(u_k)$ is given by
\[
D_N\tilde{F}(u_k)^\nu = S_\nu^\ast ( \chi_\nu S + \rho \chi_{\Yset_k} S ).
\]
 We end up with the following theorem.

\begin{theorem}
Let $G$ be given as in \eqref{eq:GforNewtonit}. A suitable Newton derivative of $G$ at $u_k$ in direction $h\in U$ is given by
\begin{align*}
(D_N G(u_k)h)^\nu = h_\nu+   \frac{1}{\alpha}  \chi_{\Iset_k^\nu} \left(S_\nu^*(\chi_\nu S+\chi_{\Yset_k}\rho S)h \right).
\end{align*}
\end{theorem}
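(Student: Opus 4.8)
The plan is to assemble the claimed formula from the general chain-rule result of Lemma \ref{lem:chainrule} together with an explicit computation of the Newton derivative $D_N\tilde F(u_k)$. Since the preceding lemma shows that $\tilde F$ satisfies Assumption \ref{ass:Fpart1}, Lemma \ref{lem:chainrule} applies directly and gives
\[
D_N G(u_k) = \mathrm{Id} + \frac{1}{\alpha}\,\chi_{\Iset_k}\, D_N\tilde F(u_k),
\]
with $\chi_{\Iset_k} = \chi_{\Iset}(u_k)$ as in \eqref{eq:defd}. It therefore only remains to identify $D_N\tilde F(u_k)^\nu$ and to read off the $\nu$-th component of the product.

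First I would compute $D_N\tilde F(u_k)^\nu$ by splitting $\tilde F^\nu(u) = S_\nu^\ast\chi_\nu(Su - y_d^\nu) + S_\nu^\ast\bar\mu(u)$ into two summands, treating each separately, and recombining via Lemma \ref{lemma:propNewFunc} b). The first summand is affine in $u$, hence continuously Fréchet differentiable with derivative $S_\nu^\ast\chi_\nu S$; by Lemma \ref{lemma:Frechet-semismooth} this Fréchet derivative also serves as its Newton derivative. The second summand is the composition of the bounded linear (hence Fréchet differentiable) operator $S_\nu^\ast$ with $\bar\mu(u) = (\mu + \rho(Su - \psi))_+$.

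The heart of the argument is the Newton derivative of $\bar\mu$. Writing $(\cdot)_+ = \max(0,\cdot)$ and using the shift identity, I would apply Lemma \ref{lemma:max_semismooth} together with the chain rule Lemma \ref{lemma:propNewFunc} c) to the composition of the pointwise $\max$ with the affine map $u \mapsto \mu + \rho(Su - \psi)$. This yields $D_N\bar\mu(u_k) = \chi_{\Yset_k}\rho S$, where $\Yset_k = \{x\in\Omega \colon (\mu + \rho(Su_k - \psi))(x) > 0\}$ is precisely the set on which the Newton derivative of $\max$ equals $1$. Composing once more with the linear $S_\nu^\ast$ via Lemma \ref{lemma:propNewFunc} c) gives $S_\nu^\ast\chi_{\Yset_k}\rho S$ for the second summand, and summing the two contributions produces
\[
D_N\tilde F(u_k)^\nu = S_\nu^\ast\big(\chi_\nu S + \chi_{\Yset_k}\rho S\big).
\]
Substituting this into the formula from Lemma \ref{lem:chainrule} and expanding the component-wise product according to the definition \eqref{eq:diag_def} then yields the stated expression for $(D_N G(u_k)h)^\nu$.

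I expect the only genuinely delicate point to be the verification of the hypotheses of the chain rule Lemma \ref{lemma:propNewFunc} c) across the norm gap when differentiating $\bar\mu$: one must check that the Newton derivative of $\max$ is bounded near the relevant point and that the inner affine map is Lipschitz, so that the successive compositions with $S$ and $S_\nu^\ast$ remain legitimate within the correct $L^q$-to-$L^2$ mapping chain. These mapping properties are, however, exactly the ones already established when verifying Assumption \ref{ass:Fpart1} in the preceding lemma, so the remaining work amounts to bookkeeping of function spaces and the reassembly of components rather than any new estimate.
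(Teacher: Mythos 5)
Your proposal is correct and follows essentially the same route as the paper: invoking Lemma \ref{lem:chainrule} (justified by the preceding lemma verifying Assumption \ref{ass:Fpart1}), splitting $\tilde F^\nu$ into the affine tracking part with Fr\'echet derivative $S_\nu^\ast\chi_\nu S$ and the penalty part, and computing $D_N\bar\mu(u_k)=\chi_{\Yset_k}\rho S$ via Lemma \ref{lemma:max_semismooth} and the chain rule before composing with $S_\nu^\ast$ and reading off components. Your closing remark on verifying the chain-rule hypotheses across the norm gap is a point the paper leaves implicit, but as you note it is already covered by the mapping properties established when checking Assumption \ref{ass:Fpart1}.
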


Let us analyze this problem in more detail. In particular we want to provide a finite element discretization. Lets us recall the sets $\Inu_k$ and $\Yset_k$ and also define the sets $\Anua_k$ and $\Anub_k$
\begin{equation}\label{eq:defsets}
\begin{alignedat}{2}
\Anua_k&:=\left\lbrace x\in \Omega \colon -\frac{1}{\alpha}\tilde F^\nu(u_k)\leq u^\nu_a \right\rbrace, &\quad \Anub_k&:= \left\lbrace x\in \Omega \colon -\frac{1}{\alpha}\tilde F^\nu(u_k)\geq u^\nu_b\right\rbrace,\\
\Inu_k &:= \left\lbrace x\in \Omega \colon -\frac{1}{\alpha} \tilde F^\nu(u_k) \in(u^\nu_a,u^\nu_b) \right\rbrace, &\Yset_k&:=\left\lbrace x\in \Omega \colon (\mu+\rho(Su_k-\psi)) > 0 \right\rbrace.
\end{alignedat}
\end{equation}
Following the lines of Theorem \ref{theo:convSSNgen} we obtain that the following equality holds
\begin{equation*}
u^\nu_{k+1}(x)=\begin{cases}
u^\nu_a(x) \qquad &\text{if} \quad  x\in \Anua_k,\\
-\frac{1}{\alpha}\left(S_\nu^*(\chi_\nu (Su_{k+1}-y_d^\nu)+\chi_{\Yset_k}(\mu+\rho(Su_{k+1}-\psi)) \right)(x) &\text{if} \quad  x\in \Inu_k,\\
u^\nu_b(x) &\text{if} \quad  x\in \Anub_k.
\end{cases}
\end{equation*}

Thus, on the set $\Inu_k$ we obtain
\begin{equation}\label{eq:proj_u}
\chi_{\Inu_k}\left(u^\nu_{k+1}+\frac{1}{\alpha}\left(S_\nu^*(\chi_\nu (Su_{k+1}-y_d^\nu)+\chi_{\Yset_k}(\mu+\rho(Su_{k+1}-\psi))\right) \right) =0.
\end{equation}

Let us introduce the function $u_{k+1}^{\Iset}\in U$ with components $u^{\nu,\Iset}_{k+1}:= \chi_{\Iset_k^\nu}u^\nu_{k+1}$ for $\nu=1,...,N$. Hence, we can write $u_{k+1}^{\Iset} = \chi_{\Iset_k} u_{k+1}$ with $\chi_{\Iset_k} := \chi_{\Iset}(u_k)$ defined in \eqref{eq:defd}. In a similar way we define $\chi_{A_k^a}$ and $\chi_{A_k^b}$. Using this definitions we can now write \eqref{eq:proj_u} as a linear equation for the $\nu$-th component of $u_{k+1}^{\Iset}$ and we obtain
\begin{align*}
&u^{\nu,\Iset}_{k+1} + \frac{1}{\alpha}\chi_{\Inu_k}\left(S_\nu^*(\chi_\nu S+\chi_{\Yset_k}\rho S)u^{\Iset}_{k+1}\right) \notag\\
&\hspace{5em}= -\frac{1}{\alpha}\chi_{\Inu_k}\left( S_\nu^*((\chi_\nu S+\chi_{\Yset_k}\rho S)(\chi_{\Aset^{a}_k}u_a+\chi_{\Aset^{b}_k}u_b)-\chi_\nu y_d^\nu+\chi_{\Yset_k}(\mu-\rho \psi))\right).
\end{align*}

The Newton step can now be written in the following compact form.

\begin{lemma}
The solution $u_{k+1}$ of one step of the semi-smooth Newton method is given by
\[
u_{k+1} = u_{k+1}^{\Iset} + \chi_{A_k^a} u_a + \chi_{A_k^b} u_b,
\]
where $u_{k+1}^\Iset$ is given as the solution of the linear system
\begin{align}
\left(\mathrm{Id} + \chi_{\Iset_k} T^k\right) u^\Iset_{k+1} = \chi_{\Iset_k} g_k,
\label{eq:SSNNewtonStepsystem}
\end{align}
with the operator $T^k : U \to U$ and function $g_k \in U$ given by
\begin{align*}
(T^k h)^\nu &:= \frac{1}{\alpha} S_\nu^*(\chi_\nu S+\chi_{\Yset_k}\rho S)h,\\
(g_k)^\nu &: = -\frac{1}{\alpha}\chi_{\Inu_k}\left( S_\nu^*((\chi_\nu S+\chi_{\Yset_k}\rho S)(\chi_{\Aset^{a}_k}u_a+\chi_{\Aset^{b}_k}u_b)-\chi_\nu y_d^\nu+\chi_{\Yset_k}(\mu-\rho \psi))\right).
\end{align*}
Here $\mathrm{Id}:U \to U$ denotes the identity mapping.
\end{lemma}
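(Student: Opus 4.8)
The plan is to assemble the ingredients already derived in the text rather than to recompute the Newton step from scratch. First I would record the pointwise, component-wise characterization of $u^\nu_{k+1}$ obtained above: on the lower active set $\Anua_k$ it equals $u^\nu_a$, on the upper active set $\Anub_k$ it equals $u^\nu_b$, and on the inactive set $\Inu_k$ it satisfies the free-node relation \eqref{eq:proj_u}. Since for each fixed $\nu$ the three sets $\Anua_k$, $\Inu_k$, $\Anub_k$ partition $\Omega$ up to a null set, I can add the three contributions and use the definition $u^{\nu,\Iset}_{k+1} = \chi_{\Inu_k} u^\nu_{k+1}$ to obtain the additive splitting
\[
u_{k+1} = \chi_{\Iset_k} u_{k+1} + \chi_{A_k^a} u_a + \chi_{A_k^b} u_b = u_{k+1}^{\Iset} + \chi_{A_k^a} u_a + \chi_{A_k^b} u_b,
\]
which is the first claim. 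Here it is important to note that $u^{\Iset}_{k+1}$ is supported on $\Inu_k$, so that its support is disjoint from the supports of the two boundary contributions.

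Next I would derive the linear system \eqref{eq:SSNNewtonStepsystem}. Starting from the inactive-node relation \eqref{eq:proj_u}, I substitute the splitting into the state and use linearity of $S$ to write $S u_{k+1} = S u^{\Iset}_{k+1} + S(\chi_{A_k^a} u_a + \chi_{A_k^b} u_b)$. Separating the terms that depend on the unknown $u^{\Iset}_{k+1}$ from those determined by the current iterate (the data $y_d^\nu$, $\psi$, $\mu$ and the already-fixed boundary values $u_a,u_b$), and moving the latter to the right-hand side, yields exactly the component-wise equation displayed just before the lemma. Reading off the operator acting on $u^{\Iset}_{k+1}$ as $(T^k h)^\nu = \tfrac{1}{\alpha} S_\nu^\ast(\chi_\nu S + \chi_{\Yset_k}\rho S)h$ and the constant part as $(g_k)^\nu$ gives the compact form $(\mathrm{Id} + \chi_{\Iset_k} T^k) u^{\Iset}_{k+1} = \chi_{\Iset_k} g_k$. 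Since $\chi_{\Inu_k}$ is idempotent, the outer prefactor $\chi_{\Iset_k}$ on the right-hand side is consistent with the $\chi_{\Inu_k}$ already built into the definition of $g_k$.

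The computation itself is routine; the main thing to be careful about is the bookkeeping of the characteristic-function prefactors. In particular one must check that $T^k$ is defined on all of $U$ but enters the system only through $\chi_{\Iset_k} T^k$, that $\chi_{\Iset_k} u^{\Iset}_{k+1} = u^{\Iset}_{k+1}$ because $u^{\Iset}_{k+1}$ vanishes off $\Inu_k$, and that the free relation is genuinely linear in $u^{\Iset}_{k+1}$, which relies on $\tilde F$ being affine once the active sets $\Inu_k$ and $\Yset_k$ have been frozen. Finally I would remark that solvability of \eqref{eq:SSNNewtonStepsystem}, i.e.\ invertibility of $\mathrm{Id} + \chi_{\Iset_k} T^k$, is precisely what Assumption \ref{ass:bilinform} together with Theorem \ref{theo:convSSNgen} guarantees, so that the step $u_{k+1}$ is well defined.
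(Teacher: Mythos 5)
Your proposal is correct and follows essentially the same route as the paper, which states this lemma as a summary of the in-text derivation immediately preceding it: the pointwise case formula for $u^\nu_{k+1}$ on $\Anua_k$, $\Inu_k$, $\Anub_k$, the relation \eqref{eq:proj_u}, substitution of the splitting $u_{k+1} = u^{\Iset}_{k+1} + \chi_{A_k^a}u_a + \chi_{A_k^b}u_b$ using linearity of $S$, and reading off $T^k$ and $g_k$. Your closing remark on solvability is also consistent with the paper, where invertibility of $\mathrm{Id}+\chi_{\Iset_k}T^k$ on functions supported in the inactive set (so that the extra $\chi_{\Iset_k}$ can be inserted) is exactly what the coercivity argument of Theorem \ref{theo:convSSN-PDE}(a) via Lax--Milgram delivers.
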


The complete semi-smooth Newton method is given in the following algorithm.
\begin{algorithm}[H]
    \caption{Semi-smooth Newton method for problem \eqref{eq:augNEP_offset}}
    \label{alg:semi-smooth}
    \begin{algorithmic}
     \item[1:] Set $k=0$, choose $u_0$ in $\U^N$\\
     \item[2:] \textbf{repeat}
        \item[3:] Set $\Anua_k, \Anub_k, \Inu_k$ and $\Yset_k$ as defined in \eqref{eq:defsets} 
        \item[4:] Solve for $u_{k+1}^\Iset \in L^2(\Omega)^N$ by solving \eqref{eq:SSNNewtonStepsystem}
        \item[5:] Set $u_{k+1} := u_{k+1}^{\Iset} + \chi_{A_k^a} u_a + \chi_{A_k^b} u_b$
        \item[6:] Set $k:=k+1$
        \item[7:] \textbf{until} $\Anua_k=\Anua_{k-1},\Anub_k=\Anub_{k-1}, \Inu_k=\Inu_{k-1}$ and $\Yset_k=\Yset_{k-1}$.
    \end{algorithmic}
\end{algorithm}

\begin{theorem}[\bf{Convergence of the semi-smooth Newton method}] \label{theo:convSSN-PDE}
Let Assumption \ref{ass:alpha} hold and let $\bu$ denote the solution of \eqref{eq:augNEP_offset}. Then the semi-smooth Newton method from Algorithm \ref{alg:semi-smooth} has the following properties
\begin{itemize}
\item[a)] 
Let $\norm{u_0-\bu}_{L^2(\Omega)^N}$ be sufficiently small. Then the iterates $u_k$ converge for $k\rightarrow \infty$ superlinearly to $\bar{u}$ which is the solution of \eqref{eq:augNEP_offset}.
\item[b)] Let $u_k$ be generated by Algorithm \ref{alg:semi-smooth} such that the stopping criterion from step 7 is satisfied. Then $u_k$ is a solution of \eqref{eq:GforNewtonit}. 
\end{itemize}
\end{theorem}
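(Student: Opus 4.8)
The plan is to establish part (a) by invoking the general convergence result Theorem \ref{theo:convSSNgen}, and part (b) by a direct inspection of the stopping criterion. For part (a), note first that the solution $\bu$ of \eqref{eq:augNEP_offset} solves the associated variational inequality, which is equivalent to the fixed-point equation $G(\bu)=0$ for $G$ from \eqref{eq:GforNewtonit}. It therefore suffices to verify the two hypotheses of Theorem \ref{theo:convSSNgen}, namely Assumption \ref{ass:Fpart1} and Assumption \ref{ass:bilinform}. The former has already been established for the present $\tilde F$, so the real work lies in proving coercivity of the bilinear form \eqref{eq:bilinform}.

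To verify Assumption \ref{ass:bilinform}, I would exploit that each component $\chi_{\Iset_k}^\nu$ is a pointwise $0/1$ multiplier, hence idempotent and self-adjoint on $L^2(\Omega)$. Writing $\tilde w := \chi_{\Iset_k} w$, the sandwiched form collapses to
\[
a(w,w)=\norm{w}_U^2 + \frac{1}{\alpha}\bigl(D_N\tilde F(u_k)\tilde w,\tilde w\bigr)_U .
\]
Inserting $D_N\tilde F(u_k)^\nu = S_\nu^*(\chi_\nu S + \rho\chi_{\Yset_k}S)$ splits the inner product into a penalization part, which equals $\rho\int_{\Yset_k}(S\tilde w)^2\dx \ge 0$ and may simply be discarded, and an observation part $\sum_\nu(S\tilde w,\chi_\nu S_\nu\tilde w^\nu)$. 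For the latter I would reuse verbatim the decomposition $\sum_\nu\chi_\nu S_\nu=\sum_\nu S_\nu-\sum_\nu(S_\nu-\chi_\nu S_\nu)$ together with Young's inequality from the proof of Theorem \ref{thm:NEPunique}, yielding
\[
\bigl(D_N\tilde F(u_k)\tilde w,\tilde w\bigr)_U \ge -\frac14\Bigl(\sum_{\nu=1}^N\norm{\chi_Z(S_\nu-\chi_\nu S_\nu)}^2_{L^2\to L^2}\Bigr)\norm{\tilde w}_U^2 .
\]
Since $\norm{\tilde w}_U=\norm{\chi_{\Iset_k}w}_U\le\norm{w}_U$, Assumption \ref{ass:alpha} then furnishes the coercivity constant $c=1-\tfrac{1}{4\alpha}\sum_\nu\norm{\chi_Z(S_\nu-\chi_\nu S_\nu)}^2>0$, so Assumption \ref{ass:bilinform} holds and Theorem \ref{theo:convSSNgen} delivers superlinear convergence.

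For part (b), I would argue directly from the piecewise description of the Newton iterate. When the stopping criterion holds, the sets $\Anua_k,\Anub_k,\Inu_k$ and $\Yset_k$ generated from $u_k$ coincide with those from $u_{k-1}$ used to build $u_k$. On the active sets $u_k^\nu$ equals $u_a^\nu$ respectively $u_b^\nu$, which matches $P_{\Uad}$ there by the very definition of $\Anua_k$ and $\Anub_k$; on $\Inu_k$ the defining linear equation reads $u_k^\nu=-\tfrac1\alpha S_\nu^*(\chi_\nu(Su_k-y_d^\nu)+\chi_{\Yset_k}(\mu+\rho(Su_k-\psi)))$. The crucial observation is that on the stabilized set $\Yset_k=\{(\mu+\rho(Su_k-\psi))>0\}$ one has $\chi_{\Yset_k}(\mu+\rho(Su_k-\psi))=(\mu+\rho(Su_k-\psi))_+=\bmu(u_k)$, so the right-hand side equals $-\tfrac1\alpha\tilde F^\nu(u_k)$ and lies in $(u_a^\nu,u_b^\nu)$ by definition of $\Inu_k$. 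Collecting the three cases gives $u_k=P_{\Uad}(-\tfrac1\alpha\tilde F(u_k))$, i.e. $G(u_k)=0$.

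I expect the main obstacle to be the coercivity step in part (a): the observation term is genuinely indefinite because the domains $\Omega_\nu$ differ, and controlling it is precisely what forces Assumption \ref{ass:alpha}. The two features that make the argument go through are benign rather than obstructive: the projector $\chi_{\Iset_k}$ only shrinks the norm, and the penalization term is nonnegative, so after reducing to $\tilde w$ the estimate is essentially identical to the one already carried out in Theorem \ref{thm:NEPunique}.
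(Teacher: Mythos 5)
Your proposal is correct and follows essentially the same route as the paper's own proof: for part a) you verify Assumption \ref{ass:bilinform} exactly as the paper does, namely by discarding the nonnegative penalization term, reusing the decomposition $\sum_{\nu}\chi_\nu S_\nu=\sum_{\nu}S_\nu-\sum_{\nu}(S_\nu-\chi_\nu S_\nu)$ and Young's inequality from Theorem \ref{thm:NEPunique}, and exploiting $\norm{\chi_{\Iset_k}w}_U\le\norm{w}_U$ so that Assumption \ref{ass:alpha} yields the coercivity constant needed for Theorem \ref{theo:convSSNgen}. Your direct pointwise check in part b), based on the stabilized sets and the identity $\chi_{\Yset_k}(\mu+\rho(Su_k-\psi))=\bmu(u_k)$, is the same argument the paper carries out by substituting the stabilized sets into \eqref{eq:SSNNewtonStepsystem} and recovering the projection formula.
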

\begin{proof}
\begin{enumerate}
\item[a)] 
Due to Theorem \ref{theo:convSSNgen} it remains to check if Assumption \ref{ass:bilinform} is satisfied. Thus, we consider the bilinear form 
  \begin{align*}
  a(w,v):=\left(\left(\mathrm{Id}+\frac{1}{\alpha}\chi_{\Iset_k}M \chi_{\Iset_k}\right)w,v\right)_U,
  \end{align*}
  where $(Mw)^\nu = S_\nu^\ast ( \chi_\nu S + \rho \chi_{\Yset_k} S )w$. Using the decomposition from the proof of Theorem \ref{thm:NEPunique} we obtain
\begin{align*}
&\left(\left(\mathrm{Id}+\frac{1}{\alpha}\chi_{\Iset_k}M\chi_{\Iset_k}\right)w,v\right)_U =
\sum_{\nu=1}^N\left(w^\nu +\frac{1}{\alpha} \chi_{\Iset_k}S_\nu^\ast ( \chi_\nu S + \rho \chi_{\Yset_k} S )\chi_{\Iset_k}w,w^\nu \right)_{L^2(\Omega)}\\
&=\norm{w}^2_U+\frac{1}{\alpha}\sum_{\nu=1}^N \left( S\chi_{\Iset_k}w,\chi_\nu S_\nu \chi_{\Iset_k}w^\nu\right)_{L^2(\Omega)} + \frac{\rho}{\alpha}\sum_{\nu=1}^N\left(\chi_{\Yset_k}S\chi_{\Iset_k}w, S_\nu\chi_{\Iset_k} w^\nu\right)_{L^2(\Omega)}\\
&=\norm{w}^2_U+\frac{1}{\alpha}\left( S\chi_{\Iset_k}w,\sum_{\nu=1}^N \chi_\nu S_\nu \chi_{\Iset_k}w^\nu\right)_{L^2(Z)} + \frac{\rho}{\alpha} \norm{\chi_{\Yset_k}S\chi_{\Iset_k}w}^2_U\\ 
&\geq\norm{w}^2_U+\frac{1}{\alpha}\norm{S\chi_{\Iset_k}w}^2_{L^2(Z)} -\frac{1}{\alpha}\left( S\chi_{\Iset_k}w,\sum_{\nu=1}^N (S_\nu-\chi_\nu S_\nu) \chi_{\Iset_k}w^\nu\right)_{L^2(Z)}\\
&\geq\norm{w}^2_U  -  \frac{1}{4\alpha}  \left\| \sum\limits_{\nu=1}^N \chi_Z \left( S_\nu - \chi_\nu S_\nu   \right)\chi_{\Iset_k}w^\nu\right\|_{L^2(\Omega)}^2 \\
&\geq \norm{w}^2_U  -\frac{1}{4\alpha}\left(\sum_{\nu=1}^N\norm{\chi_Z ( S_\nu - \chi_\nu S_\nu) }^2_{L^2(\Omega)\rightarrow L^2(\Omega)}\right)\left(\sum_{\nu=1}^N\norm{\chi_{\Iset_k}w^\nu}_{L^2(\Omega)}^2 \right) \\
&\geq \left(1 -\frac{1}{4\alpha}\sum_{\nu=1}^N\norm{\chi_Z ( S_\nu - \chi_\nu S_\nu) }^2_{L^2(\Omega)\rightarrow L^2(\Omega)} \right) \norm{w}^2_U.
\end{align*}
Choosing $\alpha$ as in Assumption \ref{ass:alpha} we get the desired result from Theorem \ref{theo:convSSNgen}.

\item[b)] We know that the solution of \eqref{eq:SSNNewtonStepsystem} is unique for fixed sets $\Anua_k, \Anub_k, \Inu_k$ and $\Yset_k$. \\We set $\Anua_k:= \Anua_{k+1}, \Anub_k:= \Anub_{k+1}, \Inu_k := \Inu_{k+1}$ and $\Yset_k:=\Yset_{k+1}$ in \eqref{eq:SSNNewtonStepsystem} and get
\begin{align*}
&u^{\nu,\Iset}_{k+1} + \frac{\chi_{\Inu_{k+1}}}{\alpha}\left(S_\nu^*(\chi_\nu S+\chi_{\Yset_{k+1}}\rho S)u^\Iset_{k+1}\right) =\\
& -\frac{\chi_{\Inu_{k+1}}}{\alpha}\left(\chi_\nu S_\nu^*(S+\chi_{\Yset_{k+1}}\rho S)(\chi_{\Aset^{a}_{k+1}}u_a+ \chi_{\Aset^{b}_{k+1}}u_b) -S_\nu^*(\chi_\nu y_d^\nu+\chi_{\Yset_{k+1}}(-\mu+\rho \psi))\right)\\
\Leftrightarrow & \ u^{\nu,\Iset}_{k+1} + \frac{\chi_{\Inu_{k+1}}}{\alpha}\left(S_\nu^*(\chi_\nu (Su_{k+1} -y_d^\nu) + \chi_{\Yset_{k+1}}\left(\mu + \rho(Su_{k+1} -\psi)\right)\right) = 0 \\
\Leftrightarrow &\ u^{\nu,\Iset}_{k+1} + \frac{\chi_{\Inu_{k+1}}}{\alpha} p^\nu(u_{k+1}) = 0 .
\end{align*}
Together with $u_{k+1}=u_a$ on $\Aset^a_k$ and $u_{k+1}=u_b$ on $\Aset^b_k$ we get
\begin{align*}
u^\nu_{k+1} - P_{[u^\nu_a,u^\nu_b]}\left(-\frac{1}{\alpha}p^\nu(u_{k+1}) \right) = 0.
\end{align*}
Hence, $u_{k+1}$ is a solution of \eqref{eq:GforNewtonit}.
\end{enumerate}
\end{proof}

Again, we can drop the assumption on $\alpha$ if the sets $\Omega_\nu$ coincide.

\begin{corollary}
Let $\Omega_\nu = \Omega_0 \subseteq \Omega$ for all $\nu=1,...,N$ and let $\alpha > 0$. Then the associated Newton method to the NEP \eqref{eq:augNEP_offset} converges superlinear.
\end{corollary}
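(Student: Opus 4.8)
The plan is to reduce the statement to a verification of the coercivity Assumption \ref{ass:bilinform} and then exploit the special geometry $\Omega_\nu = \Omega_0$ to make the offset vanish, exactly in the spirit of the corollary following Theorem \ref{thm:NEPunique}. First I would record that uniqueness of the solution $\bu$ of \eqref{eq:augNEP_offset} for every $\alpha > 0$ is already guaranteed by the second part of Theorem \ref{thm:augNEP_unique}, and that Assumption \ref{ass:Fpart1} has been checked for the operator $\tilde F$ in the lemma preceding Theorem \ref{theo:convSSN-PDE}. Hence, by Theorem \ref{theo:convSSNgen}, superlinear convergence follows once I establish that the bilinear form \eqref{eq:bilinform} associated with the present $\tilde F$ is coercive with a constant independent of the iterate $u_k$.

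The core step is to rerun the chain of estimates from the proof of Theorem \ref{theo:convSSN-PDE} a), which bounds $a(w,w)$ from below by
\[
\left( 1 - \frac{1}{4\alpha}\sum_{\nu=1}^N \norm{\chi_Z(S_\nu - \chi_\nu S_\nu)}^2_{L^2(\Omega)\to L^2(\Omega)}\right)\norm{w}^2_U.
\]
Here I would not attempt to retain the offset term; instead I would observe that under the hypothesis $\Omega_\nu = \Omega_0$ for all $\nu$ one has $Z = \bigcup_\nu \Omega_\nu = \Omega_0$, so that $\chi_Z = \chi_\nu = \chi_{\Omega_0}$ for every $\nu$. Consequently, pointwise almost everywhere,
\[
\chi_Z(S_\nu - \chi_\nu S_\nu) = \chi_{\Omega_0}(1 - \chi_{\Omega_0})S_\nu = 0,
\]
since $\chi_{\Omega_0}(1 - \chi_{\Omega_0}) \equiv 0$. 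Therefore the entire offset collapses and the lower bound reduces to $a(w,w) \ge \norm{w}^2_U$ for every $\alpha > 0$.

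With coercivity (constant $c = 1$) in hand, Assumption \ref{ass:bilinform} holds, and the claim follows immediately from Theorem \ref{theo:convSSNgen}, equivalently from Theorem \ref{theo:convSSN-PDE} a) with the quantity appearing in Assumption \ref{ass:alpha} now trivially zero. The only point that requires a little care, and the nearest thing to an obstacle, is confirming that the factor $\chi_{\Yset_k}$ arising from the augmentation term does not interfere with the argument: in the estimate it enters only through the manifestly nonnegative contribution $\frac{\rho}{\alpha}\norm{\chi_{\Yset_k}S\chi_{\Iset_k}w}^2_U$, which may simply be discarded. Thus the augmentation is harmless and the reduction to the vanishing offset proceeds exactly as in the non-augmented case.
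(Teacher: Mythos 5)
Your proposal is correct and follows essentially the same route as the paper: the corollary is an immediate consequence of Theorem \ref{theo:convSSN-PDE}, since $\Omega_\nu = \Omega_0$ forces $Z = \Omega_0$ and hence $\chi_Z(S_\nu - \chi_\nu S_\nu) = \chi_{\Omega_0}(1-\chi_{\Omega_0})S_\nu = 0$, so the offset in Assumption \ref{ass:alpha} vanishes and the coercivity estimate reduces to $a(w,w)\geq \norm{w}_U^2$ for every $\alpha>0$. Your observation that the augmentation term enters only through the nonnegative contribution $\frac{\rho}{\alpha}\norm{\chi_{\Yset_k}S\chi_{\Iset_k}w}^2_U$, which may be discarded, matches exactly how it is handled in the paper's proof of Theorem \ref{theo:convSSN-PDE}~a).
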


Considering the non-reducible NEP \eqref{eq:prob_non_red} just results in setting $M:= S_\nu^*\chi_\nu S$ instead of  $S_\nu^*(\chi_\nu S+\rho\chi_{\Yset_k}S)$. Hence, the convergence result from Theorem \ref{theo:convSSN-PDE} transfers one by one to this kind of problem.

\subsection{Implementation}\label{sec:subsec:ImplNewIt}
Let us now focus on the details of an implementation using finite elements. 
To illustrate the implementation we focus on problem \eqref{eq:augNEP_offset} where $S$ denotes the solution operator of \eqref{eq:pdeDirichlet} with $A:=-\Delta$. Using standard methods the corresponding optimality system is given by
\begin{subequations}
\label{eq:JointConv-kkt-augNEP}
	\begin{equation}
	\begin{alignedat}{2}
	-\Delta \by &= \sum_{\nu=1}^N\bu^\nu &\quad &\text{in } \Omega\\
	\end{alignedat}
	\label{eq:JointConv-kkt-augNEP:1}
	\end{equation}
	\begin{equation}
	\begin{alignedat}{2}
	-\Delta \bp^\nu &= \chi_\nu(\by-y_d^\nu)+\bmu &\quad &\text{in } \Omega\\
	\end{alignedat}
	\label{eq:JointConv-kkt-augNEP:2}
	\end{equation}
	\begin{equation}
	( \bp^\nu+\alpha \bu^\nu,v^\nu-\bu^\nu) \geq 0\quad\forall v^\nu\in \Uadnu, \label{eq:JointConv-kkt-augNEP:3}
	\end{equation}
	\begin{equation}
	\bmu = \left(\mu+\rho(\by-\psi)\right)_+,\label{eq:JointConv-kkt-augNEP:4}
	\end{equation}
\end{subequations}
where the state and adjoint equation satisfy suitable boundary conditions. We are interested in a finite element discretization, so let us define the finite dimensional space $\mathbb{V}_h := \text{span}\{\phi_1,...,\phi_m\}$. The index $h$ indicates the underlying discretization and the functions $\phi_j$ denote the basis functions. 
 Let us now consider a discretized version of \eqref{eq:JointConv-kkt-augNEP}. We define the bilinear form
\[
a(w,v) := \int_{\Omega}\nabla w \nabla v \dx.
\] 

Then, the discretized version of \eqref{eq:JointConv-kkt-augNEP} is given by the solution $(y_h,u_h,p_h)$ of the system
\begin{equation}
\begin{alignedat}{3}
\label{eq:prob-discrete}
a(y_h,v_h) &= \left(\sum_{\nu=1}^N u_{h}^\nu,v_h \right) &\qquad &\forall v_h\in \mathbb{V}_h,\\
a(p_{h}^\nu,v_h) &= (\chi_\nu(y_h-y_d^\nu)+(\mu+\rho(y_h-\psi))_+,v_h)&& \forall v_h\in \mathbb{V}_h,\\
u_{h}^\nu &= P_{[u^\nu_a,u^\nu_b]}\left( -\frac{1}{\alpha} p_{h}^\nu\right).
\end{alignedat}
\end{equation}

Since for a given $u_h$ there exists an unique $y_h(u_h)$ and an unique adjoint states $p_h^\nu(u_h)$, system \eqref{eq:prob-discrete} can be reduced to the single equation
\begin{align*}
u_{h}^\nu &= P_{[u^\nu_a,u^\nu_b]}\left( -\frac{1}{\alpha} p_{h}^\nu (u_h)\right)\qquad \forall v_h\in \mathbb{V}_h.
\end{align*}
Again we define the active and inactive sets for the discrete function $u_{k,h}$:
\begin{alignat*}{2}
\Anua_k&:=\left\lbrace x\in \Omega \colon -\frac{1}{\alpha}p_h^\nu(u_{k,h})\leq u^\nu_a \right\rbrace,\qquad\quad   &\Anub_k&:= \left\lbrace x\in \Omega \colon -\frac{1}{\alpha}p_h^\nu(u_{k,h})\geq u^\nu_b\right\rbrace,\\
\Inu_k &:= \left\lbrace x\in \Omega \colon -\frac{1}{\alpha} p_h^\nu(u_{k,h}) \in(u^\nu_a,u^\nu_b) \right\rbrace, &\Yset_k&:=\left\lbrace x\in \Omega \colon (\mu+\rho(Su_{k,h}-\psi)) > 0 \right\rbrace.
\label{eq:defsets_discrete}
\end{alignat*}
We now define the functions $u_{k+1,h}^\Iset := \chi_{\Iset_k} u_{k+1,h}$, where $\chi_{\Iset^k}:=\chi_\Iset(u_k)$. Following the lines of the proof of Section \ref{sec:subsec:NewtonIt} we can establish a linear equation for the components of $u_{k+1,h}^\Iset$:
\begin{align*}
&u^{\nu,\Iset}_{k+1,h} + \frac{1}{\alpha}\chi_{\Inu_k}\left(S_\nu^*(\chi_\nu S+\chi_{\Yset_k}\rho S)u^{\Iset}_{k+1,h}\right) \notag\\
&\hspace{5em}= -\frac{1}{\alpha}\chi_{\Inu_k}\left( S_\nu^*((\chi_\nu S+\chi_{\Yset_k}\rho S)(\chi_{\Aset^{a}_k}u_a+\chi_{\Aset^{b}_k}u_b)-\chi_\nu y_d^\nu+\chi_{\Yset_k}(\mu-\rho \psi))\right).
\end{align*}

We want to solve this system by testing it with a function $v_h \in \mathbb{V}_h$. Note that we have $u_{k+1,h}^{\nu, \Iset} \not\in \mathbb{V}_h$ in general, but it can be calculated as a projection $u_{k+1,h}^{\nu, \Iset} = \chi_{\Iset_k^\nu} \tilde u_{k+1,h}^\nu$ of a function $\tilde u_{k+1,h}^\nu \in \mathbb{V}_h$, see \eqref{eq:prob-discrete}.

In the following denote $\underline{u_h} \in \R^m$ the coefficient vector of a function $u_h \in \mathbb{V}_h$, where $m$ denotes the dimension of the space $\mathbb{V}_h$.

Furthermore, we assume that $u_a^\nu, u_b^\nu \in \mathbb{V}_h$. We can reformulate the Newton step as a linear system in the coefficient vectors of $\tilde u_{k+1,h}^\nu$.

\begin{lemma}\label{lem:newton_discrete}
The coefficient vectors $\underline{\tilde u_{k+1,h}^{\nu}}$ for $1 \leq \nu \leq N$ satisfy the linear system
\begin{equation}\label{eq:mult_player_discrete}
\begin{pmatrix}
E_{1,1} & E_{1,2} & \dots & \dots & E_{1,N}\\
E_{2,1} & E_{2,2} & E_{2,3}   & \dots & E_{2,N}\\
\vdots & \ddots & \ddots &  &\vdots\\
\vdots &  &\ddots &  \ddots & E_{N-1,N}\\
E_{N,1} & \dots & \dots & E_{N,N-1} & E_{N,N}
\end{pmatrix}
\begin{pmatrix}
\underline{\tilde u_{k+1,h}^{1}}\\ \underline{\tilde u_{k+1,h}^{2}}\\ \vdots \\ \underline{\tilde u_{k+1,h}^{N}}
\end{pmatrix}
=
\begin{pmatrix}
C_1\\C_2\\ \vdots \\ C_{N-1}\\C_N
\end{pmatrix},
\end{equation}
where
\[
E_{i,j} := \left.\begin{cases}
M_{\Iset_k^i} + \frac{1}{\alpha}M_{\Iset_k^i}K^{-1}M_i K^{-1}M_{\Iset_k^j}+ \frac{\rho}{\alpha}M_{\Iset_k^i}K^{-1}M_{\Yset_k}K^{-1}M_{\Iset_k^j} & \text{if} \ i=j\\
\frac{1}{\alpha}M_{\Iset_k^i}K^{-1}M_i K^{-1}M_{\Iset_k^j}+ \frac{\rho}{\alpha}M_{\Iset_k^i}K^{-1}M_{\Yset_k}K^{-1}M_{\Iset_k^j} & \text{else}
\end{cases}\right\} \in \R^{m \times m},
\]
as well as
\begin{align*}
C_i &:= - \frac{1}{\alpha} M_{\Iset_k^i} K^{-1} \bigg[  M_{\Yset^k} ( \underline{\mu} - \rho \underline{\psi} ) - M_i \underline{y_d^i}\\
&\quad\quad + (M_i + \rho M_{\Yset^k}) K^{-1} \sum\limits_{i=1}^N \left(  M_{\Aset_k^{i,a}} \underline{u_a^i}  + M_{\Aset_k^{i,b}} \underline{u_b^i}\right)  \bigg] \in \R^m,
\end{align*}
and matrices $K, M_\nu, M_{\Inu_k}, M_{\Aset^{\nu,a}_k}, M_{\Aset^{\nu,b}_k}$ and $M_{\Yset_k}$ of the size $\R^{m\times m}$
 with
\[
\begin{array}{rlrl}
K_{ij}&:=\left[\int_{\Omega} \nabla \phi_i\cdot \nabla \phi_j\right]_{ij}, & (M_\nu)_{ij} &:= \left[\int_{\Omega_\nu}  \phi_i \phi_j\right]_{ij}\\
\left(M_{\Inu_k}\right)_{ij} &:= \left[\int_{\Inu_k}  \phi_i \phi_j\right]_{ij}, & \left(M_{\Aset^{\nu,a}_k}\right)_{ij}  &:= \left[\int_{\Aset^{\nu,a}_k}   \phi_i \phi_j\right]_{ij},\\
\left(M_{\Aset^{\nu,b}_k}\right)_{ij}  &:= \left[\int_{\Aset^{\nu,b}_k}   \phi_i \phi_j\right]_{ij}, & \left(M_{\Yset_k}\right)_{ij}  &:=\left[\int_{\Yset_k}   \phi_i \phi_j\right]_{ij},
\end{array}
\]

where $\phi_i,\phi_j$ denote the finite element basis functions of $\mathbb{V}_h$.
\end{lemma}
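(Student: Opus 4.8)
The plan is to translate the componentwise operator identity for $u_{k+1,h}^{\nu,\Iset}$ derived just above the lemma into a linear system in the coefficient vectors, by testing every term against the basis functions $\phi_1,\dots,\phi_m$ and replacing each continuous operation by its finite element matrix counterpart. The whole argument is therefore a careful bookkeeping exercise rather than a new estimate.

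First I would set up the discrete dictionary. Since the bilinear form $a(\cdot,\cdot)$ induces the stiffness matrix $K$, solving $-\Delta y = g$ in weak form reads $K\underline{y}=(\text{source vector of }g)$, so both the solution operators $S_\nu$ and, by self-adjointness of $-\Delta$, their adjoints $S_\nu^\ast$ are represented by $K^{-1}$ acting on the corresponding source vector. The decisive observation is that multiplying a function $g\in\mathbb{V}_h$ by a characteristic function $\chi_E$ and pairing with $\phi_i$ gives $\int_E g\,\phi_i=(M_E\,\underline{g})_i$, so masking by $\chi_E$ is encoded by the truncated mass matrix $M_E$, with $E$ ranging over $\Omega_\nu,\Inu_k,\Anua_k,\Anub_k,\Yset_k$. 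In particular, although $u_{k+1,h}^{\nu,\Iset}=\chi_{\Iset_k^\nu}\tilde u_{k+1,h}^\nu$ with $\tilde u_{k+1,h}^\nu\in\mathbb{V}_h$ does not itself lie in $\mathbb{V}_h$, its weak representation against $\phi_l$ is exactly $M_{\Inu_k}\,\underline{\tilde u_{k+1,h}^\nu}$.

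Next I would assemble the left-hand side. Using linearity $S u^\Iset=\sum_j S_j\,u_{k+1,h}^{j,\Iset}$, the source vector of $S u^\Iset$ is $\sum_j K^{-1}M_{\Iset_k^j}\,\underline{\tilde u_{k+1,h}^j}$; applying $(\chi_\nu+\rho\chi_{\Yset_k})$ inserts $(M_\nu+\rho M_{\Yset_k})$, applying $S_\nu^\ast$ inserts the outer $K^{-1}$, and the final masking $\chi_{\Inu_k}$ with the scaling $\tfrac1\alpha$ inserts the outermost $M_{\Inu_k}$ and the factor $\tfrac1\alpha$. Reading off the coefficient of $\underline{\tilde u_{k+1,h}^j}$ yields the block $E_{i,j}$, where the identity term $M_{\Iset_k^i}\,\underline{\tilde u_{k+1,h}^i}$ stemming from $u_{k+1,h}^{\nu,\Iset}$ contributes the extra diagonal summand $M_{\Iset_k^i}$ only when $i=j$. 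The right-hand side is handled identically: substituting $w^j=\chi_{\Aset_k^{j,a}}u_a^j+\chi_{\Aset_k^{j,b}}u_b^j$ with $u_a^j,u_b^j\in\mathbb{V}_h$ turns $Sw$ into $K^{-1}\sum_j\big(M_{\Aset_k^{j,a}}\underline{u_a^j}+M_{\Aset_k^{j,b}}\underline{u_b^j}\big)$, while $-\chi_\nu y_d^\nu$ and $\chi_{\Yset_k}(\mu-\rho\psi)$ contribute $-M_\nu\underline{y_d^\nu}$ and $M_{\Yset_k}(\underline{\mu}-\rho\underline{\psi})$; collecting everything under the common prefactor $-\tfrac1\alpha M_{\Inu_k}K^{-1}$ produces exactly $C_i$.

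The only delicate point, and the one I expect to demand the most care, is tracking the order of the matrix factors: each masking acts to the left of everything already assembled, whereas each PDE solve inserts a $K^{-1}$, so the nested composition $S_\nu^\ast\big((\chi_\nu+\rho\chi_{\Yset_k})S\,\cdot\big)$ must yield the sandwiched pattern $M_{\Inu_k}K^{-1}(M_\nu+\rho M_{\Yset_k})K^{-1}M_{\Iset_k^j}$ and no other permutation of the same factors. Once this ordering is fixed and the identification $S_\nu^\ast\leftrightarrow K^{-1}$ (via self-adjointness of $-\Delta$) is used, the stated formulas for $E_{i,j}$ and $C_i$ follow by inspection, completing the proof.
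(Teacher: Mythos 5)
Your proposal is correct and follows exactly the route the paper intends: the paper states the lemma without a separate proof, deriving it by testing the Newton-step identity for $u_{k+1,h}^{\Iset}$ against the basis functions, and your dictionary (masking by $\chi_E$ $\leftrightarrow$ truncated mass matrix $M_E$, each solve with $S_\nu$ or $S_\nu^\ast$ $\leftrightarrow$ $K^{-1}$ via symmetry of $K$, with the resulting sandwiched ordering $M_{\Inu_k}K^{-1}(M_\nu+\rho M_{\Yset_k})K^{-1}M_{\Iset_k^j}$) reproduces the blocks $E_{i,j}$ and the vectors $C_i$ precisely, including the observation that $\chi_{\Iset_k^\nu}\tilde u_{k+1,h}^\nu\notin\mathbb{V}_h$ is nonetheless represented weakly by $M_{\Inu_k}\underline{\tilde u_{k+1,h}^\nu}$. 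One cosmetic remark: the summation index $i$ inside the paper's formula for $C_i$ clashes with the block index and should be read as $j$, exactly as your derivation produces it.
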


We can reconstruct the state and the adjoint states using the coefficient vectors $\underline{ \tilde u_{k+1,h}^{\nu}}$.

\begin{corollary}
The coefficient vector of the state $y_{k+1,h}$ satisfies
\[
\underline{ y_{k+1,h}  } = K^{-1}  \sum\limits_{\nu=1}^N   \left(  M_{\Iset_k^\nu} \underline{ \tilde u_{k+1,h}^{\nu}  } + M_{\Aset_k^{\nu,a}} \underline{u_a^\nu} + M_{\Aset_k^{\nu,b}} \underline{u_b^\nu}   \right)
\]
and the coefficient vector of the adjoint state $p_{k+1,h}^{\nu}$ can be computed by
\[
\underline{p_{k+1,h}^{\nu}} = K^{-1} \left(  M_\nu(  \underline{ y_{k+1,h}  }  - \underline{  y_d^\nu  }   ) + M_{\Yset_k}(  \underline{\mu} + \rho (  \underline{ y_{k+1,h}  }  - \underline{\psi} )   )   \right).
\]
\end{corollary}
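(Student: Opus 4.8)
The plan is to read both formulas off the discretized optimality system \eqref{eq:prob-discrete} directly, inserting the already-established decomposition of the Newton iterate and translating every $L^2$-pairing with a basis function $\phi_i$ into a (restricted) mass matrix acting on a coefficient vector. No new estimate is needed; the content is a careful Galerkin bookkeeping.

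First I would treat the state. The discrete state equation in \eqref{eq:prob-discrete} reads $a(y_{k+1,h},v_h) = (\sum_{\nu=1}^N u_{k+1,h}^\nu, v_h)$ for all $v_h\in\mathbb{V}_h$; choosing $v_h=\phi_i$ and recalling $a(y_{k+1,h},\phi_i) = (K\underline{y_{k+1,h}})_i$, the task reduces to computing the right-hand side. Here I use the control reconstruction $u_{k+1,h}^\nu = u_{k+1,h}^{\nu,\Iset} + \chi_{\Aset_k^{\nu,a}} u_a^\nu + \chi_{\Aset_k^{\nu,b}} u_b^\nu$ together with $u_{k+1,h}^{\nu,\Iset} = \chi_{\Iset_k^\nu}\tilde u_{k+1,h}^\nu$, so that the pairing splits over the three disjoint sets $\Iset_k^\nu$, $\Aset_k^{\nu,a}$ and $\Aset_k^{\nu,b}$. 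On each piece the characteristic function merely restricts the domain of integration: since $\tilde u_{k+1,h}^\nu, u_a^\nu, u_b^\nu\in\mathbb{V}_h$, expanding them in the basis turns $\int_{\Iset_k^\nu}\tilde u_{k+1,h}^\nu\phi_i$, $\int_{\Aset_k^{\nu,a}} u_a^\nu\phi_i$ and $\int_{\Aset_k^{\nu,b}} u_b^\nu\phi_i$ precisely into the $i$-th entries of $M_{\Iset_k^\nu}\underline{\tilde u_{k+1,h}^\nu}$, $M_{\Aset_k^{\nu,a}}\underline{u_a^\nu}$ and $M_{\Aset_k^{\nu,b}}\underline{u_b^\nu}$. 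Summing over $\nu$ and applying $K^{-1}$ yields the claimed state formula.

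Next I would handle the adjoint state analogously. Testing the discrete adjoint equation with $\phi_i$ gives $(K\underline{p_{k+1,h}^\nu})_i = (\chi_\nu(y_{k+1,h}-y_d^\nu) + (\mu+\rho(y_{k+1,h}-\psi))_+, \phi_i)$. The one genuinely nonlinear ingredient is the positive part, which I would resolve using the set $\Yset_k$: by its definition the argument $\mu+\rho(Su_k-\psi)$ is positive exactly there, so after the Newton step the multiplier reduces to $\chi_{\Yset_k}(\mu+\rho(y_{k+1,h}-\psi))$, while the observation term $\chi_\nu(\,\cdot\,)$ restricts integration to $\Omega_\nu$. With $y_d^\nu,\mu,\psi$ represented in $\mathbb{V}_h$, the two contributions become $M_\nu(\underline{y_{k+1,h}}-\underline{y_d^\nu})$ and $M_{\Yset_k}(\underline{\mu}+\rho(\underline{y_{k+1,h}}-\underline{\psi}))$, and multiplying by $K^{-1}$ gives the stated adjoint formula.

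The argument is essentially bookkeeping, so the only point requiring care, and the main potential obstacle, is the correct handling of the inactive-set part: $u_{k+1,h}^{\nu,\Iset}=\chi_{\Iset_k^\nu}\tilde u_{k+1,h}^\nu$ is itself not an element of $\mathbb{V}_h$, yet its $L^2$-pairing with each $\phi_i$ over all of $\Omega$ collapses to the restricted mass matrix $M_{\Iset_k^\nu}$ acting on $\underline{\tilde u_{k+1,h}^\nu}$. I would make this reduction explicit, and likewise record the standing representation of the data $y_d^\nu,\mu,\psi$ in the finite element space (alongside the assumed $u_a^\nu,u_b^\nu\in\mathbb{V}_h$), since these representations are exactly what justify replacing each integral by a mass-matrix--coefficient-vector product.
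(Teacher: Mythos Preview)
Your proposal is correct and is exactly the argument the paper has in mind: the corollary is stated without proof, as an immediate consequence of the discrete system \eqref{eq:prob-discrete} together with the control decomposition $u_{k+1,h}^\nu = \chi_{\Iset_k^\nu}\tilde u_{k+1,h}^\nu + \chi_{\Aset_k^{\nu,a}} u_a^\nu + \chi_{\Aset_k^{\nu,b}} u_b^\nu$ and the linearized adjoint equation (with $\chi_{\Yset_k}$ replacing the positive part) from the Newton step. Your explicit Galerkin bookkeeping simply spells out what the paper leaves implicit.
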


The control $u_{k+1,h}^\nu$ can be computed by
\[
u_{k+1,h}^{\nu} = \chi_{\Iset_k^\nu} \tilde u_{k+1, h}^{\nu} + \chi_{\Aset_k^{\nu,a}} u_a^\nu + \chi_{\Aset_k^{\nu, b}} u_b^\nu.
\]
We only need the adjoint states to update our active sets, hence kinks and discontinuities in the control will not be accumulated during the algorithm. This is an advantage over the discrete version of the active-set method. However, the expressions arising in the Newton method are more complicated than the expressions in the active-set method.

\section{Active-Set Method}\label{sec:activeSet}
In this section we want to introduce an active-set method which is equivalent to the semi-smooth Newton method. For additional information regarding active-set methods, we want to refer to \cite{HintermItoKunisch2002primalDualActiveSetAsSSN, Stadler2009,ItoKunisch2003semismoothVarEq, ItoKunisch2003semismoothState,BergouItoKunusch1999primalDualConstOptCont} and the references therein.
\subsection{Equivalence to Active-Set Method}
 Let us establish the relation between the semi-smooth Newton method and the active-set method. For the sake of simplicity we consider the setting from Section \ref{sec:subsec:ImplNewIt}. We consider the problem's first-order optimality conditions \eqref{eq:JointConv-kkt-augNEP}.
Reformulating \eqref{eq:JointConv-kkt-augNEP:3} by applying the projection formula one has to solve systems of this type in the active-set method which is defined below.

\begin{algorithm}[H]
    \caption{Active-set method for problem \eqref{eq:augNEP_offset}}
    \label{alg:activeset}
    \begin{algorithmic}
     \item[1:] Set $k=0$, choose $(y_0,u_0,p_0)\in Y \times L^2(\Omega)^N \times L^2(\Omega)^N$\\
     \item[2:] \textbf{repeat}
        \item[3:] Set $\Anua_k, \Anub_k, \Inu_k$ and $\Yset_k$ as defined in \eqref{eq:defsets} 
        \item[4:] Solve for $(y_{k+1},u_{k+1},p_{k+1}) \in Y\times\U^N\times L^2(\Omega)^N$ by solving 	  
        \begin{subequations}
\begin{equation}
\begin{alignedat}{2}
-\Delta  y_{k+1}&=\sum_{\nu=1}^N u^\nu_{k+1} &\quad &\text{in } \Omega,\\
\end{alignedat}
\label{eq:alg:actset:1}
\end{equation}
\begin{equation}
\begin{alignedat}{2}
-\Delta p^\nu_{k+1} &= \chi_\nu\left(y_{k+1}-y_d^\nu\right)+\chi_{\Yset_k}(\mu+\rho(y_{k+1}-\psi)) &\quad &\text{in } \Omega,\\
\end{alignedat}
\label{eq:alg:actset:2}
\end{equation}
\begin{equation} 
u^\nu_{k+1}+\chi_{\Inu_k} \left(\frac{1}{\alpha} p^\nu_{k+1} \right)= \chi_{\Anua_kk}u^\nu_a + \chi_{\Anub_k}u^\nu_b
\label{eq:alg:actset:3}
\end{equation}
\end{subequations}
        \item [5:] Set $k=k+1$
        \item [6:] \textbf{until} $\Anub_k=\Anub_{k-1},\Anua_k=\Anua_{k-1}, \Inu_k=\Inu_{k-1}$ and $\Yset_k=\Yset_{k-1}$.
    \end{algorithmic}
\end{algorithm}
Here, we assume that the state and the adjoint equation satisfy suitable boundary conditions. Considering the active-set method from Algorithm \ref{alg:activeset}, equation \eqref{eq:alg:actset:2} yields the identity
\begin{align*}
p^\nu_{k+1} = S_\nu^*\left(\chi_\nu(y_{k+1}-y_d^\nu)+\chi_{\Yset_k}(\mu+\rho(y_{k+1}-\psi)\right).
\end{align*}
Inserting this identity in \eqref{eq:alg:actset:3} and exploiting $y_{k+1}=Su_{k+1}$ from \eqref{eq:alg:actset:1} we get
\begin{align*}
u^\nu_{k+1}&+ \frac{\chi_{\Inu_k}}{\alpha}\left(S_\nu^*(\chi_\nu S+\chi_{\Yset_k}\rho S)u_{k+1}\right) \notag\\
&= \chi_{\Anua_k}u^\nu_a+\chi_{\Anub_k}u^\nu_b+\frac{\chi_{\Inu_k}}{\alpha}\left(S_\nu^*(\chi_\nu y_d^\nu+\chi_{\Yset_k}(-\mu+\rho \psi))\right). 
\end{align*}
Using the representation $u_{k+1}= \chi_{\Aset^a_k} u_{k+1} +\chi_{\Aset^b_k} u_{k+1} +\chi_{\Iset_k}u_{k+1}$ we get $u_{k+1} =u_a$ on $\Aset^a_k$ and $u_{k+1} =u_b$ on $\Aset^b_k$. On the set $\Iset^k$ we have for all $\nu=1,..,N$
\begin{align*}
&\chi_{\Inu_k}u^\nu_{k+1}+ \frac{\chi_{\Inu_k}}{\alpha}\left(S_\nu^*(\chi_\nu S+\chi_{\Yset_k}\rho S)u_{k+1}\right)  =\frac{\chi_{\Inu_k}}{\alpha}\left(S_\nu^*(\chi_\nu y_d^\nu+\chi_{\Yset_k}(-\mu+\rho \psi))\right) \\
\Leftrightarrow & \chi_{\Inu_k}u^\nu_{k+1}+ \frac{\chi_{\Inu_k}}{\alpha}\left(S_\nu^*(\chi_\nu S+\chi_{\Yset_k}\rho S)\chi_{\Iset^k}u_{k+1}\right) \\
&\hspace{3em}=\frac{\chi_{\Inu_k}}{\alpha}\left(S_\nu^*(\chi_\nu y_d^\nu+\chi_{\Yset_k}(-\mu+\rho \psi))\right) -\frac{\chi_{\Iset^\nu_k}}{\alpha}\left(S_\nu^*(\chi_\nu S+\chi_{\Yset_k}\rho S)(\chi_{\Aset^a_k}u_a+\chi_{\Aset^b_k}u_b)\right),
\end{align*}

which coincides with a Newton step from \eqref{eq:SSNNewtonStepsystem}. In a similar way we can start with the Newton method and derive the active-set method. Hence, both methods are equivalent.

\subsection{Implementation}
Let us also present a numerical implementation of the active-set method. To formulate the active-set method we need to introduce the matrix $M\in\R^{m\times m}, (M)_{ij} := \left[  \int_{\Omega} \phi_i \phi_j  \right]_{ij}$.

\begin{lemma}
One step of the active-set method from Algorithm \ref{alg:activeset} can be computed by solving the system
\begin{equation}\label{eq:active_set_discrete}
\begin{pmatrix}
K & E_1 & 0\\
E_2 & 0 & E_3\\
0 & E_4 & E_5
\end{pmatrix}\left(\begin{array}{c}
\underline{y}\\
\underline{u}\\
\underline{p}
\end{array} \right)= \left(\begin{array}{c}
\mathbf{0}\\
l_1\\
l_2
\end{array}  \right)
\end{equation}
where $E_1 := \begin{pmatrix} -M& \cdots  & -M\end{pmatrix} \in \R^{m \times Nm}$ and
\[
\begin{array}{ll}
E_2 := \begin{pmatrix}-M_1-\rho M_{\Yset_k}\\ \vdots\\-M_N-\rho M_{\Yset_k} \end{pmatrix} \in \R^{Nm \times m}, \quad\quad&
E_3 := \begin{pmatrix}K&&\\&\ddots&\\&&K\end{pmatrix} \in \R^{Nm \times Nm} \vspace{0.25cm},\\
E_4 := \begin{pmatrix}M&&\\&\ddots&\\&&M\end{pmatrix} \in \R^{Nm \times Nm}, &
E_5 := \begin{pmatrix}\alpha^{-1} M_{\Iset_k^1}&&\\&\ddots&\\&& \alpha^{-1} M_{\Iset_k^N}\end{pmatrix} \in \R^{Nm \times Nm},
\end{array}
\]
as well as
\[
\underline{u} := \begin{pmatrix}\underline{u_{k+1,h}^1}\\\vdots\\\underline{u_{k+1,h}^N}\end{pmatrix} \in \R^{Nm}, \quad\quad  \underline{y} := \underline{y_{k+1,h}} \in \R^{m}, \quad\quad \underline{p} := \begin{pmatrix}\underline{p_{k+1,h}^1}\\\vdots\\\underline{p_{k+1,h}^N}\end{pmatrix} \in \R^{Nm},
\]
and right hand side
\[
l_1:= \begin{pmatrix}-M_1 \underline{y_d^1} + M_{\Yset_k} (\underline{\mu} - \rho \underline{\psi})\\\vdots\\-M_N \underline{y_d^N} + M_{\Yset_k} (\underline{\mu} - \rho \underline{\psi})\end{pmatrix}, \quad l_2 := \begin{pmatrix}M_{\Aset_k^{1,a}} \underline{u_a^1} + M_{\Aset_k^{1,b}} \underline{u_b^1}\\\vdots\\M_{\Aset_k^{N,a}} \underline{u_a^N} + M_{\Aset_k^{N,b}} \underline{u_b^N} \end{pmatrix}, \quad \mathbf{0} \in \R^m
\]
with the notation used in Lemma \ref{lem:newton_discrete}.
\end{lemma}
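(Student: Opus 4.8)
The plan is to obtain the block system \eqref{eq:active_set_discrete} by a straightforward Galerkin discretization of the three equations \eqref{eq:alg:actset:1}--\eqref{eq:alg:actset:3} that constitute one step of Algorithm \ref{alg:activeset}. I would seek the unknowns $y_{k+1}, u_{k+1}^\nu, p_{k+1}^\nu$ in the finite element space $\mathbb{V}_h$, represent each by its coefficient vector $\underline{y}, \underline{u^\nu}, \underline{p^\nu}$, and -- exactly as in Lemma \ref{lem:newton_discrete} -- assume $u_a^\nu, u_b^\nu \in \mathbb{V}_h$. Each equation is then tested against every basis function $\phi_j$, $j = 1, \dots, m$, and the resulting scalar identities are collected into a matrix-vector equation. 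The key observation is simply that testing a term of the form $\int_D (\cdot)\phi_j$ over a measurable subset $D \in \{\Omega, \Omega_\nu, \Inu_k, \Aset_k^{\nu,a}, \Aset_k^{\nu,b}, \Yset_k\}$ produces precisely the corresponding (restricted) mass matrix applied to the coefficient vector of the integrand.

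First I would treat the state equation \eqref{eq:alg:actset:1}. Testing $-\Delta y_{k+1} = \sum_\nu u_{k+1}^\nu$ with $\phi_j$ and integrating by parts turns the left-hand side into $K\underline{y}$ and the right-hand side into $M\sum_\nu \underline{u^\nu}$; moving the latter to the left yields the first block row $K\underline{y} + E_1\underline{u} = \mathbf{0}$ with $E_1 = (-M, \dots, -M)$. Next I would discretize the adjoint equation \eqref{eq:alg:actset:2}. Testing with $\phi_j$ gives $K\underline{p^\nu}$ on the left, while the right-hand side $\chi_\nu(y_{k+1}-y_d^\nu) + \chi_{\Yset_k}(\mu + \rho(y_{k+1}-\psi))$ integrates to $M_\nu(\underline{y} - \underline{y_d^\nu}) + M_{\Yset_k}(\underline{\mu} + \rho\underline{y} - \rho\underline{\psi})$. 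Collecting the two occurrences of $\underline{y}$ on the left produces the coefficient $-(M_\nu + \rho M_{\Yset_k})$, i.e. the $\nu$-th block of $E_2$, while $E_3 = \diag(K, \dots, K)$ multiplies $\underline{p}$ and the remaining terms $-M_\nu\underline{y_d^\nu} + M_{\Yset_k}(\underline{\mu} - \rho\underline{\psi})$ form the $\nu$-th block of $l_1$; this is exactly the second block row.

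Finally I would discretize the projected control equation \eqref{eq:alg:actset:3}. Testing $u_{k+1}^\nu + \chi_{\Inu_k}(\tfrac{1}{\alpha}p_{k+1}^\nu) = \chi_{\Anua_k}u_a^\nu + \chi_{\Anub_k}u_b^\nu$ with $\phi_j$ gives $M\underline{u^\nu} + \frac{1}{\alpha}M_{\Inu_k}\underline{p^\nu} = M_{\Aset_k^{\nu,a}}\underline{u_a^\nu} + M_{\Aset_k^{\nu,b}}\underline{u_b^\nu}$, which is the $\nu$-th block of the third row $E_4\underline{u} + E_5\underline{p} = l_2$ with $E_4 = \diag(M, \dots, M)$ and $E_5 = \diag(\alpha^{-1}M_{\Iset_k^1}, \dots, \alpha^{-1}M_{\Iset_k^N})$. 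Stacking the three block rows gives \eqref{eq:active_set_discrete}. There is no genuine analytic difficulty here; the argument is pure linear-algebra bookkeeping. The only points requiring care -- and where I expect the main (minor) obstacle to lie -- are the sign conventions when the $\underline{y}$-dependent part of the adjoint right-hand side is moved into $E_2$, and the observation that the control is represented by a full coefficient vector in $\mathbb{V}_h$ (so that $\int_\Omega u_{k+1}^\nu \phi_j = (M\underline{u^\nu})_j$), whereas in the Newton implementation of Section \ref{sec:subsec:ImplNewIt} the characteristic-function cut had to be carried explicitly; here all cuts are absorbed into the restricted mass matrices appearing in the system.
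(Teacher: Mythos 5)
Your proposal is correct and coincides with the derivation the paper intends: the lemma is stated without an explicit proof precisely because it is the routine Galerkin assembly of the three equations \eqref{eq:alg:actset:1}--\eqref{eq:alg:actset:3}, testing against the basis functions of $\mathbb{V}_h$ and absorbing the characteristic functions into the restricted mass matrices, exactly as you describe. Your closing remarks on the sign bookkeeping in $E_2$ and on the control being carried as a full coefficient vector in $\mathbb{V}_h$ (so the projection identity is only enforced weakly, in contrast to the Newton implementation of Section \ref{sec:subsec:ImplNewIt}) match the paper's own discussion following the lemma.
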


Let us now compare the discrete Newton step \eqref{eq:mult_player_discrete} and the discrete active-set method \eqref{eq:active_set_discrete}. The entries on the diagonal of the matrix on the left hand side of \eqref{eq:mult_player_discrete} $E_{\nu,\nu}$ are symmetric. However, for $N>1$ the resulting system is not symmetric.  Note that the matrix \eqref{eq:mult_player_discrete} should not be computed explicitly due to the appearance of $K^{-1}$. Still it is possible to compute  its matrix-vector multiplication. This makes it impossible to apply a direct solver or a preconditioner which is based on decomposition, i.e. LU-factorisation. However, it can be solved by iterative methods, i.e. GMRES or BiCGSTAB.
The resulting system for the active-set method \eqref{eq:active_set_discrete} is not symmetric even for $N=1$, but it can be solved by a direct solver with a preconditioner, i.e. incomplete LU-factorisation.

\section{Numerical Examples}
The matrices are computed using DOLFIN \cite{LoggWells2010a,LoggWellsEtAl2012a}, which is part of the open-source computing platform FEniCS \cite{AlnaesBlechta2015a,LoggMardalEtAl2012a}. The arising linear systems are solved with NumPy and SciPy.

\subsection{Example 1 - Four Player Game }
We consider a four player game like \eqref{eq:augNEP_offset} on the domain $\Omega = (0,1)^2$ with observation domains
\begin{alignat*}{6}
\Omega_1 &:= \left(0, \frac{1}{2} \right) \times  \left(0, \frac{1}{2} \right), &\quad & \Omega_2 := \left(\frac{1}{2},1 \right) \times  \left(0, \frac{1}{2} \right),\\
\Omega_3 &:= \left( \frac{1}{2},1 \right) \times  \left( \frac{1}{2},1 \right), & & \Omega_4 := \left(0, \frac{1}{2} \right) \times  \left( \frac{1}{2},1 \right).
\end{alignat*}
In this example we assume that $S$ is the solution mapping of the state equation $-\Delta y = \sum_{\nu=1}^N u^\nu$ with homogeneous Dirichlet boundary conditions.
The desired states are given by constant functions
\[
y_d^1 := 0, \;\; y_d^2 := 1, \; y_d^3 := 2, \; y_d^4 := 3 
\]
and we choose $\psi(x_1,x_2):= -2 x_1+2x_2+2$, where $(x_1,x_2)\in \Omega$. For the approximation of the multiplier we set $u_0,p_0$ and $\mu$ equal zero as well as $y_0 = 10.0$, $\alpha = 10^{-5}$, and $\rho= 10$.
Let us introduce the quantity
\[
\kappa(u_k) :=   \log\left( \frac{\|u_{k+1} - u_k\|_U}{\|u_k - u_{k-1}\|_U} \right) \left( \log\left( \frac{\|u_k - u_{k-1}\|_U}{\|u_{k-1} - u_{k-2}\|_U}\right) \right)^{-1},
\]
which is an approximation for the numerical order of convergence. If the sequence $(u_k)_k \subset U$ converges superlinear we expect $\kappa(u_k) \in (1,2)$ for $k$ large enough. Note that we do not have an exact solution available to compute the order of convergence, but in practice $\kappa(u_k)$ will give a good approximation. We use a regular triangulation with different mesh sizes $h$. 
We applied both, the semi-smooth Newton method and the active-set method to this type of problem. The system that arises if the active-set method is applied has been solved directly by using the \texttt{spsolve} method from the \texttt{scipy.sparse.linalg} library. The Newton equation instead has to be solved by an iterative method. Here we make use of the \texttt{gmres} method from the same library and use a tolerance of $10^{-12}$. Since both methods are equivalent it is not surprising that the approximated order of convergence $\kappa$ and the change of the active sets  coincide for both methods. Table \ref{tab:rates_ex1} shows the computed results dependent on $h$ for the active-set and the semi-smooth Newton method, respectively.  Clearly, the computed orders of convergence support the superlinear convergence.

We are using linear finite elements for the controls, adjoints and state variable. Let us quickly comment on our stopping criterion step 7 of Algorithm \ref{alg:semi-smooth} or step 6 of Algorithm \ref{alg:activeset}. Both algorithms stop when the active and inactive sets coincide. Due to the use of linear finite elements we compare the values on the nodes to check this condition. Let us illustrate this on the example of the set $\Aset_k^{\nu, a}$, which is defined by the inequality $\alpha^{-1} p_h^\nu(u_{k,h}) \leq u_a^\nu$. We now count all the nodes which lie in the symmetric difference of $\Aset_{k+1}^{\nu, a}$ and $\Aset_{k}^{\nu, a}$. If this returns zero, we conclude that $\Aset_{k+1}^{\nu, a} \approx \Aset_k^{\nu, a}$ holds good enough. We count these nodes for all the active and inactive sets in each iteration and sum them up. This calculation can be found in the row labeled "nodes".
\begin{figure}[H]
\centering
\includegraphics[scale=0.28]{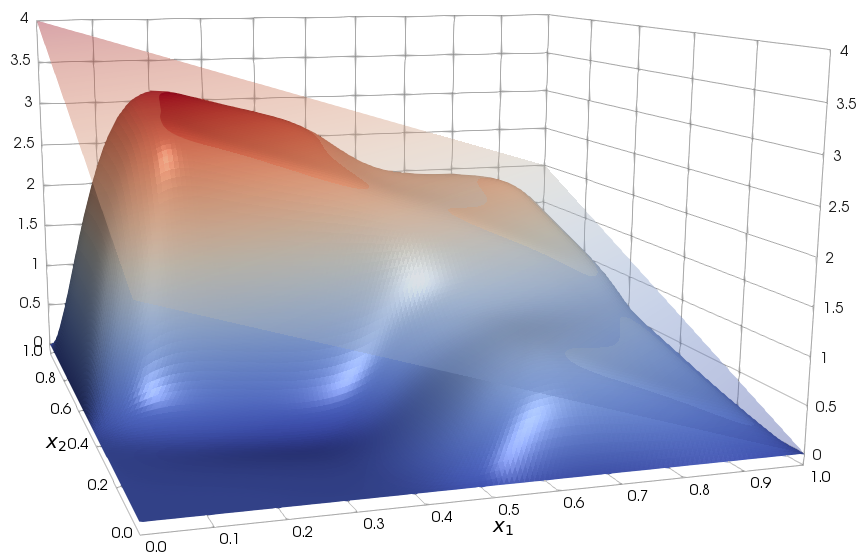}
\label{fig:Ex1-stateandconstraint}
\caption{(Example 1) Computed state and the state constraint (transparent).}
\end{figure}
\renewcommand{\arraystretch}{1.1}
\begin{table}[H]
\center
\small
\begin{tabular}{c|c|l|c|c|c|c|l|c|c|c}
& \multicolumn{5}{c|}{$h \approx 0.02,\ \text{dof}\approx 4\cdot 10^3$} & \multicolumn{5}{c}{$h\approx 0.01,\ \text{dof}\approx 1.6\cdot 10^4$}     \\\hline
 $k$ &  $\kappa(u_k)$ & nodes &  opt AS & opt N & gmres &  $\kappa(u_k)$ & nodes & opt AS & opt N & gmres\\\hline

1 &  		&2490 	& 1.8e-13	& 3.1e-07	&87& 		  &9543	& 1.8e-13	&4.9e-07   &84	\\
2 &  	 	&818	& 8.8e-14	& 1.7e-07	&74& 		  &3286	& 8.5e-14	&1.7e-07	&73	\\
3 &  	 	&417	& 5.2e-14 	& 1.3e-07	&66& 		  &1617	& 5.3e-14	&1.2e-07	&65	\\
4 & 1.3912 	&317	& 5.0e-14	& 1.0e-07	&61&  1.3883  &1256	& 5.0e-14	&1.5e-07	&59	\\
5 & 1.0706 	&179	& 5.1e-14	& 1.0e-07	&57&  1.0391  &701	& 5.2e-14	&1.5e-07	&55	\\
6 & 0.3861 	&102	& 5.3e-14	& 1.4e-07	&53&  0.4398  &380	& 5.3e-14	&1.3e-07	&52	\\
7 & 1.0543 	&48		& 5.1e-14	& 7.8e-08	&51&  0.7993  &138	& 5.5e-14	&1.3e-07	&49	\\
8 & 2.4179 	&13		& 5.2e-14	& 9.5e-08	&48&  2.7034  &78	& 5.4e-14	&1.0e-07	&47	\\
9 & 1.2989 	&6		& 5.1e-14	& 1.1e-07	&45&  1.3089  &22	& 5.4e-14	&1.0e-07	&44	\\
10 & 1.4880 &2		& 5.3e-14	& 1.1e-07	&42&  1.4354  &5	& 5.4e-14	&7.8e-08	&42	\\
11 & 1.7432 &0		& 5.2e-14	& 7.8e-08	&35&  1.6903  &1	& 5.5e-14	&7.5e-08	&35	\\
12 & 	    &		&  			&     		&  &  1.9467  &0	& 5.4e-14	&7.8e-08	&19	\\
\end{tabular}
\caption{(Example 1) Computed order of convergence $\kappa(u_k)$, change of nodes of the respective active sets, optimality of the problem, i.e., $\norm{u_k-P_\Uad(-\frac{1}{\alpha}p_k)}_{L^2(\Omega)}$ for the Newton method (opt N) and the active-set method (opt AS) and number of GMRES iterations for solving the Newton system.}
\label{tab:rates_ex1}
\end{table}

\subsection{Example 2 - Four Player Game with Known Exact Solution}
Next, we aim at solving \eqref{eq:augNEP_offset}, where $S$ denotes the solution operator of 
$$ -\Delta y = \sum_{\nu=1}^N u^\nu + f \quad \text{ in } \Omega, \qquad y = 0 \quad \text{ on } \partial \Omega,$$
where $f$ denotes a function in $L^2(\Omega)$. This setting differs slightly from the one presented above. However, it is easy to see that this does not have any impact on our convergence analysis. We investigate a four player game on the domain $\Omega = (-1,1)^2$ with observations domains
\begin{alignat*}{6}
\Omega_1 &:= \left(-1, 0 \right) \times  \left(-1, 0 \right), &\quad & \Omega_2 := \left(0,1 \right) \times  \left(-1, 0\right),\\
\Omega_3 &:= \left( -1,0 \right) \times  \left( 0,1 \right), & & \Omega_4 := \left(0, 1 \right) \times  \left( 0,1 \right).
\end{alignat*}

First, with $(x_1,x_2)\in \Omega$ we set the optimal state 
$$ \by(x_1,x_2) := \sin(2\pi x_1)\sin(2\pi x_2).$$
With $\xi^1 := (0.5, -0.5, 0.5, -0.5)$ and $\xi^2 := (0.5, 0.5, -0.5, -0.5)$ we set
$$	 r_\nu:= r_\nu(x_1,x_2) := \sqrt{(x_1+\xi^1_\nu)^2 + (x_2+\xi^2_\nu)^2}	$$
and define for $\nu= 1, ..., N$ the optimal adjoint states via
\begin{align*}
\bp_\nu := (-1)(- r_\nu^2 + 0.25)(16r_\nu^4 - 8r_\nu^2 + 1).
\end{align*}
Choosing a regularization parameter $\alpha$ and setting the control constraints $u_a^\nu:= -1.0$ and $u_b^\nu:= 20$ for all $\nu$ we construct the optimal control via $\bu^\nu := P_{\Uadnu}\left(-\frac{1}{\alpha}\bp^\nu\right)$. Due to the construction of the adjoint states we obtain $\bu^\nu = 0 $ in $\Omega\backslash \Omega_\nu$
We set $f:= -\Delta \by -\sum_{\nu=1}^N \bu^\nu$ so that $\by$ and $\bu^\nu$ satisfy the state equation. It remains to construct $y_d^\nu$, $\nu=1,...,N$. Due to the adjoint equation we obtain 
\begin{align*}
y_d^\nu := 
\begin{cases}
\by +\Delta\bp^\nu + (\mu+\rho(\by-\psi))_+   & \text{ in } \Omega_\nu, \\
0  & \text{ else.}
\end{cases}
\end{align*}
For our numerical experiments we use $\rho:= 10.0$, $\mu:= 0$ and $\psi:=2.0$. In order to solve this problem we apply the active-set method using the initial values $(y_0,u_0,p_0) := (10, 0, 0)$. Due to the knowledge of the exact solution the rate $R$ and order of convergence $\kappa$ can be estimated via 
\begin{align*}
\lim_{k\rightarrow\infty}\frac{\norm{u_{k+1}-\bu}_U}{\norm{u_k-\bu}_U} = R, \qquad \kappa^{ex}(u_k) = \left(  \log{\frac{\norm{u_{k+1}-\bu}_U}{\norm{u_k-\bu}_U}}  \right) \left(   \log{\frac{\norm{u_{k}-\bu}_U}{\norm{u_{k-1}-\bu}_U} }\right)^{-1}.
\end{align*} 
We solved the problems for $h\approx 0.02$ which corresponds to approximately $1.6\cdot 10^4$ degrees of freedom and used a tolerance of $10^{-8}$ for the \texttt{gmres} method. For determining the rate of convergence we compute in each iteration $R(u_k):=\frac{\norm{u_{k+1}-\bu}_U}{\norm{u_k-\bu}_U}$ and denote the corresponding value of the active-set method by $R_{AS}(u_k)$ and the one of the semi-smooth Newton method by $R_N(u_k)$. Let us check on the convergence properties corresponding to different regularization parameters $\alpha$. For $\alpha<0.002$ neither the active-set method nor the semi-smooth Newton method converged. For $\alpha=0.1$ the upper constraint $u_b^\nu$ is not active. Hence, we choose $\alpha\in[0.002,0.1]$. As can be seen in Table \ref{tab:rates_ex2} the computed values $R(u_k)$ for both methods imply superlinear convergence until only very few nodes in the active set change. The result is strengthened by the corresponding computed orders of convergence, i.e., $\kappa^{ex}_{AS}(u_k)$ for the active-set method and $\kappa^{ex}_N(u_k)$ for the semi-smooth Newton method. In contrast to Example 1 the results of the semi-smooth Newton method now differ slightly from the active-set method. 
This may be due to the arising expression $Sf$ in the Newton equation, which requires an additional solution of the corresponding PDE. Indeed, by introducing $\tilde{y}_d^\nu:=y_d^\nu - Sf$ we obtain that the tracking type term 
\[\frac{1}{2}\norm{y-y_d^\nu}^2_{L^2(\Omega_\nu)} = \frac{1}{2}\norm{Su + Sf -y_d^\nu}^2_{L^2(\Omega_\nu)} = \frac{1}{2}\norm{Su - \tilde{y}_d^\nu}^2_{L^2(\Omega_\nu)}\]
can be treated in the same way as in Section \ref{sec:NewtonNonRegNEP}. The same approach has to be followed for the state constraint $\psi$, where we set $\tilde{\psi}:= \psi- Sf$. Since $\tilde{y}_d^\nu$ and $\tilde{\psi}$ appear in every iteration on the right hand side of the Newton equation, the error, which arises due to the solution of $Sf$ will be accumulated over the iterations. 
Finally, Figure \ref{fig:ex2} depicts the sum of the computed controls and the computed state.
\newpage
\renewcommand{\arraystretch}{1.1}
\begin{table}[H]
\small
\center
\begin{tabular}{c|c|c|l|c|c|l|c}
 & \multicolumn{7}{c}{$\alpha = 0.002$} \\\hline
 $k$ & $R_{AS}(u_k)$ & $\kappa^{ex}_{AS}(u_k)$ & nodes AS & $R_N(u_k)$ & $\kappa^{ex}_N(u_k)$ & nodes N & gmres  \\\hline
1 &   		&  		 	& 94091	&		& 		&	94091	& 	23\\
2 & 0.6210	&  		 	& 75566	&0.6211	& 		&	75570	& 	104\\
3 & 0.5949	&  	1.0902	& 25816	&0.5948	&1.0907 &	25106	& 	149\\
4 & 0.8378	&  	0.3408	& 49534	&0.8397 &0.3363  & 	49726 	& 	155\\
5 & 0.2105	& 	8.8053	& 22188 &0.2124	&8.8686 &  19272	& 	113\\
6 & 0.1897	&  	1.0667	& 8368  &0.2349	&0.9353 & 	11216	& 	105\\
7 & 0.0585	&  	1.7082	& 32  	&0.0449 &2.1426 & 	56		& 	86\\
8 & 1.0005	&  -0.0002	& 0  	&0.9736	&0.0086 & 	0		& 	67\\
\multicolumn{8}{c}{} \\

 & \multicolumn{7}{c}{$\alpha = 0.005$} \\\hline
 $k$ & $R_{AS}(u_k)$ & $\kappa^{ex}_{AS}(u_k)$ & nodes AS & $R_N(u_k)$ &$\kappa^{ex}_N(u_k)$ & nodes N & gmres  \\\hline
1 &   		&  		 	& 	41917	&	 		&		 & 41909	& 18 	\\
2 &  0.6574	&  		 	&	28364	&	0.6574 	&		 & 28340	& 81	\\
3 &  0.5222	&  1.5488	&	32196	&	0.5221	& 1.5492 & 32132	& 106	\\
4 & 0.4825 	&  1.1218	& 	14404	&	0.4824 	&1.1218	 & 14492	& 102\\
5 & 0.3000	&  1.6519	&  	19696	&	0.2998 	&1.6522	 & 19656 	& 99\\
6 & 0.0190	&  3.2912	&   120		&	0.0184 	&3.3152	 & 120		& 79\\
7 & 0.7412	&  0.0756	&   0		&	0.7346 	&0.0772	 & 	8		& 61	\\
8 & 		&		    & 			& 	1.000 	&2.7e-06& 0 		& 3 \\
\multicolumn{8}{c}{} \\

 & \multicolumn{7}{c}{$\alpha = 0.01$} \\\hline
 $k$ & $R_{AS}(u_k)$ & $\kappa^{ex}_{AS}(u_k)$ & nodes AS & $R_N(u_k)$ &$\kappa^{ex}_N(u_k)$ & nodes N & gmres  \\\hline
1 &   		&  		 	& 29083 &	 	 &  		& 16	& 29083	\\
2 &  0.5903 &  		 	& 13496	&0.5904	 &  		& 62	& 13512	\\
3 &  0.5924 &  0.9933 	& 21940	&0.5936	 & 0.9896	& 67	& 21924	\\
4 & 0.6014 	&  0.9712	& 9486	&0.6005	 & 0.9780	& 68	& 9502	\\
5 & 0.1720	&  3.4612	& 8200  &0.1722	 & 3.4495 	& 77 	& 8208 	\\
6 & 0.0204	&  2.2117	& 96  	&0.02029 & 2.2155 	& 52	& 96\\
7 & 1.0030 	&  -0.0008  & 0  	&1.0033	 & -0.0008 	& 0		& 28\\
\multicolumn{8}{c}{} \\

 & \multicolumn{7}{c}{$\alpha = 0.1$} \\\hline
 $k$ & $R_{AS}(u_k)$ & $\kappa^{ex}_{AS}(u_k)$ & nodes AS & $R_N(u_k)$ &$\kappa^{ex}_N(u_k)$ & nodes N & gmres  \\\hline
1 &   		&  		 	& 12182 &	 	 &			& 12182	 &10\\
2 & 0.4016	&  		 	& 3413	&0.4016	 &			& 3413	 &10	\\
3 & 0.6011	&  0.5579	& 1046	&0.6011	 & 0.5579	& 1046	 &10 	\\
4 & 0.0014 	&  12.9780	& 0		&0.0014	 & 12.9780	& 0		 &11\\
\end{tabular}
\caption{(Example 2) Computed rates $R_{AS}(u_k)$, $R_N(u_k)$ and order of convergence $\kappa^{ex}_{AS}(u_k)$,$\kappa^{ex}_{N}(u_k)$, change of nodes of the respective active sets and number of gmres iterations for solving the Newton system.}
\label{tab:rates_ex2}
\end{table}
\begin{figure}[H]
\includegraphics[width=0.48\textwidth]{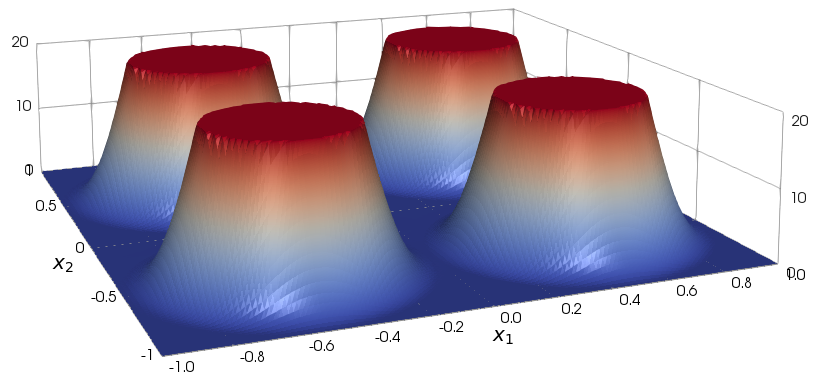}
\hfill
\includegraphics[width=0.48\textwidth]{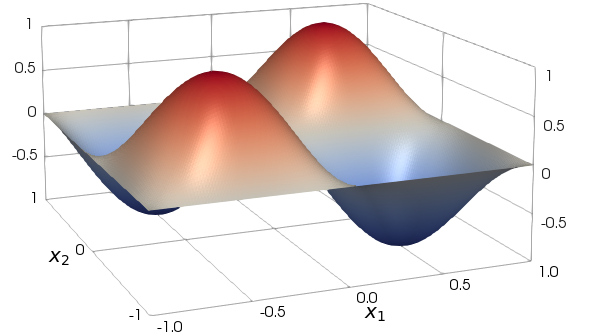}
\caption{(Example 2) Left: Computed sum of controls, right: computed state.}
\label{fig:ex2}
\end{figure}

\newpage
\titleformat{\section}[runin]{\bfseries}{}{1em}{}[. ]
\titleformat{\subsection}[runin]{\bfseries}{}{1em}{}[. ]
\section*{Appendix}
We present the proof of Theorem \ref{theo:convSSNgen}.
\begin{proof}
Due to Assumption \ref{ass:Fpart1} the operator $G$ satisfies the approximation condition. We are left to check the regularity condition. By Lemma \ref{lem:chainrule} we know that a Newton derivative of $G$ at the point $u_k$ is given by
\begin{align*}
D_N G(u_k) = \textrm{Id}+ \frac{1}{\alpha} \chi_{\Inu_k} M
\end{align*}
where $\chi_{\Inu_k}$ is defined as in \eqref{eq:defd} and $M := D_N \tilde F(u_k)$. Note that $\chi_{\Inu_k}$ depends on $u_k$. We want to apply the Lax-Milgram theorem to obtain boundedness of $D_N G(u_k) ^{-1}$. However, a direct application to the bilinear form $( D_N G(u_k)w,v )_U$ is not possible since it is not coercive in general. We consider instead an equivalent formulation which satisfies the assumptions of the Lax-Milgram theorem.

We denote by $u_{k+1}:=u_k+\delta_k$ the next iterate of the semi-smooth Newton method. A Newton step is given by
\begin{align*}
& D_N G(u_k)\delta_k = - G(u_k)\\
\Leftrightarrow\qquad  &\left( \mathrm{Id}+ \frac{1}{\alpha} \chi_{\Inu_k} M\right) \delta_k = - u_k + P_{\Uad}\left(-\frac{1}{\alpha}\tilde{F}(u_k)\right)\\
\Leftrightarrow\qquad  & \left( \mathrm{Id}+ \frac{1}{\alpha}\chi_{\Inu_k} M \right)  u_{k+1} =  P_{\Uad}\left(-\frac{1}{\alpha}\tilde{F}(u_k)\right) +\frac{1}{\alpha} \chi_{\Inu_k} M u_k.
\end{align*}

Using this representation we see that
\begin{equation*}\label{eq:reconstruct_control}
u^\nu_{k+1}(x)=\begin{cases}
u_a^\nu(x) \qquad &\text{if} \quad  x\in \Anua_k,\\
\left(-\frac{1}{\alpha}\tilde{F}(u_k)-\frac{1}{\gamma}M\delta_k\right)(x) &\text{if} \quad  x\in \Inu_k,\\
u_b^\nu(x) &\text{if} \quad  x\in \Anub_k.
\end{cases}
\end{equation*}
where the sets $\Anua_k,\Anub_k,\Inu_k$ are defined by
\begin{equation*}
\begin{split}
\Anua_k&:=\left\lbrace x\in \Omega \colon -\frac{1}{\alpha}\tilde{F}(u_k)^\nu(x)\leq u_a^\nu(x) \right\rbrace,\\
\Inu_k &:= \left\lbrace x\in \Omega \colon -\frac{1}{\alpha}\tilde{F}(u_k)^\nu(x) \in(u_a^\nu(x),u_b^\nu(x)) \right\rbrace,\\
\Anub_k&:= \left\lbrace x\in \Omega \colon -\frac{1}{\alpha}\tilde{F}(u_k)^\nu(x)\geq u_b^\nu(x)\right\rbrace.
\end{split}
\end{equation*}

Similar to $\chi_{\Inu_k}$ we define the sets $\chi_{\Aset^a_k}$ and $\chi_{\Aset^b_k}$ in a componentwise manner based on the set $\Anua_k$ and $\Anub_k$. Using the decomposition $u_{k+1} = \chi_{\Iset_k}u_{k+1} +(1-\chi_{\Iset_k}) u_{k+1}$
and exploiting the identities
\begin{align*}
P_{\Uad}\left(-\frac{1}{\alpha}\tilde{F}(u_k)\right) &= \chi_{\Aset^a_k}  u_a + \chi_{\Aset^b_k}  u_b +\chi_{\Iset_k} \left(-\frac{1}{\alpha}\tilde{F}(u_k)\right),\\
(1-\chi_{\Iset_k})u_{k+1} &= \chi_{\Aset^a_k}  u_a +\chi_{\Aset^b_k}  u_b,
\end{align*}
we have the equivalent formulation
\begin{align*}
\bigg( \mathrm{Id}&+ \frac{1}{\alpha} \chi_{\Iset_k} M \bigg)  \chi_{\Iset_k} u_{k+1}\\
&=  P_{\Uad}\left(-\frac{1}{\alpha}\tilde{F}(u_k)\right) +\frac{1}{\alpha} \chi_{\Iset_k} Mu_k -\left( \mathrm{Id}+ \frac{1}{\alpha} \chi_{\Iset_k} M\right) \bigg (1-\chi_{\Iset_k} \bigg) u_{k+1}\\
&= \chi_{\Iset_k}\left(-\frac{1}{\alpha}\tilde{F}(u_k)\right) + \frac{1}{\alpha} \chi_{\Iset_k} Mu_k  -\frac{1}{\alpha}\chi_{\Iset_k} M \bigg (1-\chi_{\Iset_k} \bigg) u_{k+1}\\
&= \chi_{\Iset_k}\left(-\frac{1}{\alpha}\tilde{F}(u_k)\right) + \frac{1}{\alpha}\chi_{\Iset_k} Mu_k  -\frac{1}{\alpha} \chi_{\Iset_k} M \bigg( \chi_{\Aset^a_k}  u_a +\chi_{\Aset^b_k}  u_b\bigg).
\end{align*}

Applying the decomposition $u_{k+1} = \chi_{\Iset_k} u_{k+1} + \chi_{\Aset^a_k}  u_a +\chi_{\Aset^b_k}  u_b$ we finally reach at

\begin{align*}
\left( \mathrm{Id}+ \frac{1}{\alpha} \chi_{\Iset_k} M \chi_{\Iset_k}\right) u_{k+1} = &\chi_{\Iset_k}\left(-\frac{1}{\alpha}\tilde{F}(u_k)\right) + \frac{1}{\alpha} \chi_{\Iset_k} Mu_k\\
&  -\frac{1}{\alpha} \chi_{\Iset_k} M \bigg( \chi_{\Aset^a_k}  u_a +\chi_{\Aset^b_k}  u_b\bigg)\\
&+ \chi_{\Aset^a_k}  u_a +\chi_{\Aset^b_k}  u_b.
\end{align*}

Defining the bilinear form $a(w,v) = \left(\left( \mathrm{Id}+ \frac{1}{\alpha} \chi_{\Iset_k} M \chi_{\Iset_k}\right) w ,v\right)_U$ we get
\begin{align*}
a(w,v) & = (w,v)_U + \frac{1}{\alpha}  (\chi_{\Iset_k}M \chi_{\Iset_k} w,v)_U \leq c\norm{w}_U\norm{v}_U.
\end{align*}

Further, with Assumption \ref{ass:bilinform} we see that $a(w,v)$ satisfies the conditions of the Lax-Milgram Theorem, which yields boundedness of $\norm{D_N G(u_k) ^{-1}}_{U\rightarrow U}$.
\end{proof}

\bibliography{mybib}
\bibliographystyle{plain_abbrv}

\end{document}